\documentclass[11pt,oneside]{article}
\usepackage{amsfonts}
\usepackage{amsmath}
\usepackage{amsthm}  
\usepackage[mathscr]{euscript}
\usepackage{mathrsfs}
\usepackage{graphicx}
\usepackage{subfigure}
\usepackage{varwidth}
\usepackage{color}\usepackage{enumitem}
\usepackage{url}
\usepackage{authblk}

\setlength{\textwidth}{6.5truein} \setlength{\textheight}{9.3truein}
\setlength{\oddsidemargin}{-0.0in}
\setlength{\evensidemargin}{-0.0in}
\setlength{\topmargin}{-0.5truein}

\newcommand{\lbl}[1]{\label{#1}}
\newtheorem{theo}{Theorem}[section]

\newtheorem{lem}{Lemma}[section]

\newtheorem{defi}{Definition}[section]
\newcommand{\be}{\begin{equation}}
\newcommand{\ee}{\end{equation}}
\newcommand\bes{\begin{eqnarray}} \newcommand\ees{\end{eqnarray}}
\newcommand{\bess}{\begin{eqnarray*}}
\newcommand{\eess}{\end{eqnarray*}}

\newcommand\ep{\varepsilon}

\newcommand\kk{\left}
\newcommand\rr{\right}
\newcommand\dd{\displaystyle}
\newcommand\vp{\varphi}

\newcommand\qq{\eqref}
\newcommand\lm{\lambda}

\newcommand\yy{\infty}

\newcommand\R{\mathbb{R}}
\newcommand\up{\overline}
\newcommand\ud{\underline}
\newcommand\w{\omega}

\newcommand\oo{\Omega}\newcommand\ooo{\Sigma}
\newcommand\rw{\rho\omega}
\newcommand\boo{\bar\Omega}
\markboth{}{}

\title{The nonlocal dispersal equation with seasonal succession
\footnote{The work of the first author was supported by NSFC Grant 12201457, and the work of the second author was supported by NSFC Grant 12171120.}}

\author[a]{Qianying Zhang}

\author[,\,b]{Mingxin Wang\footnote{Corresponding author. {\sl E-mail}: mxwang@hpu.edu.cn}}

\affil[a]{\small\it School of Mathematical Sciences, Tiangong University, Tianjin 300387, China}

\affil[b]{\small School of Mathematics and Information Science, Henan Polytechnic University, Jiaozuo 454000, China}

\date{} 

\begin{document}
\setlength{\baselineskip}{16pt} \pagestyle{myheadings}
\maketitle
\vspace{-1cm}

\begin{quote}
\noindent{\bf Abstract.} In this paper, we focus on the nonlocal dispersal monostable equation with seasonal succession, which can be used to describe the dynamics of species in an environment alternating between bad and good seasons. We first prove the existence and uniqueness of global positive solution, and then discuss the long time behaviors of solution. It is shown that its dynamics is completely determined by the sign of the principal eigenvalue, i.e., the time periodic problem has no positive solution and the  solution of the initial value problem tends to zero when principal eigenvalue is non-negative, while the time periodic positive solution exists uniquely and is globally asymptotically stable when principal eigenvalue is negative.

\noindent{\bf Keywords:} Nonlocal dispersal; Monostable equation; Seasonal succession.

\noindent {\bf AMS subject classifications (2000)}:
35K57, 47G20, 45M15, 45C05, 92D25.
 \end{quote}

 \section{Introduction} \lbl{s1}
 \setcounter{equation}{0} {\setlength\arraycolsep{2pt}

Seasonal succession is a common natural phenomena and has an important effect on the growth and interactions of species. There have been several works in this aspect. For example, the authors of \cite{HZ12, LiZhao16} studied the ODE equation and system with seasonal succession; the authors of \cite{ZZ13, MZ16, WZZnon2022} considered the travelling waves of the diffusive competition model with seasonal succession; the authors of \cite{PZ13} and \cite{WZZjde2021} investigated the free boundary problem with seasonal succession of the diffusive logistic model and competition model, respectively.

In the above cited papers, the diffusion is the random dispersal, which is called local diffusion. Although this is a reasonable approximation of the actual dispersal in many situations, it is increasingly recognized that such an approximation is not good enough in general \cite{NKRR}. For example, movements and interactions of some organisms occur widely between non-adjacent spatial locations (such as spreadings caused by seeds or insects carried to new environment by modern ways of transportation). The evolution of nonlocal dispersal has attracted a lot of attentions for both theoretically and empirically. An extensively used nonlocal dispersal operator to replace the local diffusion term $d\Delta u$ is given by
 \[d(J * u-u)(x,t):=d\left(\int_{\Omega}J(x-y) u(y,t){\rm d}y-u(x,t)\right).\]

In the present paper we concern with the nonlocal dispersal monostable model in a given bounded domain $\oo\subset\R^N$ with seasonal succession. Let $\w>0$ and $0<\rho<1$ be constants which account for the period of seasonal succession and the duration of the bad season, respectlvely. In order to save space and facilitate writing, we denote
 \[\Sigma_i^l=(i\w,\,(i+\rho)\w],\;\;\;\Sigma_i^r=((i+\rho)\w,\,(i+1)\w],\;\;\;i=0,1,2,\cdots.\]
Then $\Sigma_i^l\cup\Sigma_i^r=(i\w,\,(i+1)\w]$ is one period, and $\bigcup_{i=0}^\yy(\Sigma_i^l\cup\Sigma_i^r)=(0,\infty)$. The corresponding model reads as follows
 \bes
 \left\{\begin{array}{llll}
 u_t=-\delta u,\,\;&(x,t)\in\bar\oo\times\Sigma_i^l,\\[1mm]
 u_t-d{\cal L}[u]=f(x,t,u),\,\;&(x,t)\in\bar\oo\times\Sigma_i^r,\\[1mm]
 u(x,0)=u_0(x), &x\in\bar\oo, \, i=0,1,2,\cdots,
 \end{array}\right.\lbl{1.1}
 \ees
where coefficient $\delta$ is a positive constant, and the nonlocal dispersal operator $\mathcal{L}[u]$ is given by
 \bess
 \mathcal{L}[u](x,t):=\int_{\oo}J(x-y)u(y,t){\rm d}y-u(x,t).
 \eess
The initial function $u_0$ satisfies
  \[u_0\in C(\bar\oo),\;~u_0>0~\text{ in }~\bar\oo.\]
In what follows, unless stated otherwise, we always take $i=0, 1, 2,\cdots$.
Assume that the dispersal kernel function $J\in C(\R^N)$ and satisfies
\begin{enumerate}[leftmargin=4em]
\item[{\bf(J)}]\; $J\ge 0$, $J(0)>0$, and $\int_{\R^N}J(x){\rm d}x=1$.
 \end{enumerate}
The growth term $f: \bar\oo\times(\cup_i\bar{\Sigma}_i^r)\times\mathbb{R}^+
\rightarrow\mathbb{R}$ is assumed to be continuous and satisfies
\begin{enumerate}[leftmargin=4em]
\item[\bf(F1)] $f(x,t,0)\equiv 0$, $f(x, t+\w,u)=f(x,t, u)$ for all $x\in\bar\oo$, $t\in\Sigma_i^r$, $u\ge 0$. The function $\frac{f(x,t,u)}u$ is strictly decreasing in $u\in(0,\yy)$ for all $(x,t)\in \bar\oo\times(\cup_{i}\Sigma_i^r)$;
\item[\bf(F2)] $f(x,t,u)$ is locally Lipschitz in $u\in\R^+$, i.e., for any $L>0$, there exists a constant $K=K(L)>0$
such that
\[\left|f(x,t,u_1)-f(x,t,u_2)\right|\le K|u_1-u_2|,\;\;\forall\; u_1, u_2\in [0, L],\;\; (x,t)\in \bar\oo\times(\cup_i\Sigma_i^r);\]
\item[\bf(F3)] There exists $K_0>0$ such that $f(x,t,K_0)=0$ for all $(x,t)\in \bar\oo\times(\cup_i\Sigma_i^r)$.
\end{enumerate}

In the case where $\rho=0$, problem \eqref{1.1} becomes the nonlocal dispersal monostable equation in time periodic and space heterogeneous environment. There have been many related works in this aspect. Please refer to \cite{RS2012, SV19, SLLY20} and the references therein.

The organization of this paper is as follows. In Section 2 we prove that the problem \eqref{1.1} has a unique bounded global positive solution. In Section 3, we study the positive solution of the corresponding time periodic problem. The section 4 is concerned with the dynamical properties of \qq{1.1}. The principal eigenvalue is determined firstly, and the long time behaviors of solution are given secondly. In section 5 we are mainly consider problem \qq{1.1} in the whole space. This paper can be regarded as the generalization of the nonlocal dispersal monostable equation without seasonal succession (\cite{RS2012, SX15, SV19, SLLY20}) and the local diffusive monostable equation with seasonal succession (\cite{MZ16, PZ13, WZZjde2021, WZZnon2022, ZZ13}).

\section{Existence and uniqueness of global solution} \lbl{s2}
\setcounter{equation}{0} {\setlength\arraycolsep{2pt}

In this section, we focus on the existence, uniqueness and boundedness of the global solution of problem \eqref{1.1}. We first prove the following maximum principle.

\begin{lem}\lbl{l2.1}{\rm(Maximum principle)} Let $c\in L^\infty(\oo\times
\cup_i\Sigma_i^r)$, and $v\in C(\bar\oo\times[0,\yy))\cap C^{0,1}(\bar\oo\times\bar{\Sigma}_i^l)\cap C^{0,1}(\bar\oo\times\bar{\Sigma}_i^r)$ satisfy
 \bes
 \left\{\begin{array}{lll}
 v_t\geq -\delta v, \,\;&(x,t)\in\bar\oo\times\Sigma_i^l,\\[1mm]
 v_t-d\mathcal{L}[v]\geq c(x,t)v, \,\;&(x,t)\in\bar\oo\times\Sigma_i^r,\\[1mm]
  v(x,0)\geq 0, &x\in\bar\oo.
 \end{array}\right.
 \lbl{2.3}\ees
Then $v\geq 0$ in $\bar\oo\times[0,\yy)$. Moreover, if $v(x_0,0)>0$ for some $x_0\in\oo$, then $v(x_0, t)>0$ in $[0,\rw]$ and $v>0$ in $\bar\oo\times(\rw,\yy)$.
\end{lem}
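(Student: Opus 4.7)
I would prove the lemma by iterating over the successive half--periods $\Sigma_i^l$ and $\Sigma_i^r$. On the ODE phase $\Sigma_i^l$ the inequality $v_t\ge -\delta v$ immediately gives $v(x,t)\ge v(x,i\w)e^{-\delta(t-i\w)}$ for every $x\in\bar\oo$, so non--negativity and strict positivity propagate trivially through the bad seasons. The substantive content lies in the good seasons $\Sigma_i^r$, where one has to exploit the nonlocal dispersal equation to preserve (and eventually spread) positivity.

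\textbf{Non--negativity on $\Sigma_0^r$.} Assume inductively $v(\cdot,\rho\w)\ge 0$. I would perturb: set $\tilde v(x,t)=v(x,t)+\ep e^{\lm t}$ for $\ep>0$ and $\lm>\|c\|_\infty+d$. Using $\mathcal L[\ep e^{\lm t}](x,t)=\ep e^{\lm t}\bbb(\int_\oo J(x-y)\dy-1\bbb)\le 0$, a direct computation yields
\[
\tilde v_t-d\mathcal L[\tilde v]-c\tilde v\;\ge\;\ep e^{\lm t}(\lm-c-d\cdot 0)\;>\;0 \quad \text{on}\ \bar\oo\times\Sigma_0^r.
\]
Suppose for contradiction that $\tilde v$ ever becomes non--positive, and let $t_*=\inf\{t\in(\rho\w,\w]:\min_{x\in\bar\oo}\tilde v(x,t)\le 0\}$. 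By continuity $\min_x\tilde v(x,t_*)=0$; pick a minimizer $x_*$. Then $\tilde v(x_*,t_*)=0$, the left time--derivative satisfies $\tilde v_t(x_*,t_*)\le 0$, and $\int_\oo J(x_*-y)\tilde v(y,t_*)\dy\ge 0$, hence $\mathcal L[\tilde v](x_*,t_*)\ge 0$. This forces $\tilde v_t-d\mathcal L[\tilde v]-c\tilde v\le 0$ at $(x_*,t_*)$, contradicting the strict inequality above. Therefore $\tilde v>0$, and letting $\ep\to 0$ gives $v\ge 0$ on $\bar\oo\times[\rho\w,\w]$. Iterating this argument period by period (ODE phase trivial, nonlocal phase by the perturbation above) yields $v\ge 0$ on $\bar\oo\times[0,\yy)$.

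\textbf{Strict positivity.} Assume $v(x_0,0)>0$. The ODE on $\Sigma_0^l$ gives $v(x_0,t)\ge v(x_0,0)e^{-\delta t}>0$ on $[0,\rho\w]$. For the spreading on $\Sigma_0^r$ I would set $V=v e^{Kt}$ with $K$ large enough that $K+c-d\ge 0$; then $V\ge 0$ and
\[
V_t\;\ge\;d\int_\oo J(x-y)V(y,t)\dy+(K+c-d)V\;\ge\;d\int_\oo J(x-y)V(y,t)\dy\;\ge\;0,
\]
so $V(x,\cdot)$ is non--decreasing on $[\rho\w,\w]$. In particular $V(x_0,t)\ge V(x_0,\rho\w)>0$. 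Assumption (J) together with continuity of $J$ supplies $r>0$ with $J(z)>0$ for $|z|<r$; for any $x\in B_r(x_0)\cap\bar\oo$ the previous inequality gives $V_t(x,t)\ge dJ(x-x_0)V(x_0,\rho\w)>0$ on $(\rho\w,\w]$, so $V(x,t)>0$ for all $t>\rho\w$. A standard covering/chaining argument based on the connectedness of $\bar\oo$ and $J>0$ on $B_r(0)$ then propagates this positivity to the whole of $\bar\oo\times(\rho\w,\w]$.

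\textbf{Iteration and main obstacle.} Once $v>0$ on $\bar\oo\times\{\w\}$, the ODE phase on $\Sigma_1^l$ preserves strict positivity, and the non--negativity argument above applied from $t=\w$ onward (now with strictly positive initial data at the start of each period) yields $v>0$ on $\bar\oo\times(\rw,\yy)$. The main delicate step is the spreading argument in the first nonlocal phase: propagating positivity from a single point $x_0$ to all of $\bar\oo$ requires carefully combining the monotonicity of $V$ in $t$ with the fact that $J(0)>0$ only gives a local positivity radius, so one must iterate the ball--covering finitely many times while keeping track that each iteration consumes only an arbitrarily small portion of the time interval $(\rho\w,\w]$.
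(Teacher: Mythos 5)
Your proof follows the same skeleton as the paper's: propagate through the ODE phases $\Sigma_i^l$ explicitly via $v(x,t)\ge e^{-\delta(t-i\w)}v(x,i\w)$, handle the nonlocal phases $\Sigma_i^r$ with a maximum-principle argument, and induct over the periods. The only structural difference is that the paper does not write out the nonlocal-phase argument at all: for both the non-negativity and the strict-positivity steps it simply refers to the proof of Lemma 2.2 in \cite{CDLL2019}, whereas you supply the details (the $\ep e^{\lm t}$ perturbation with a first-touching-time contradiction, and the kernel-driven chaining of positivity), which are indeed the standard ones behind that citation. One inequality in your positivity step is not correct as written: $\int_\oo J(x-y)V(y,t)\,{\rm d}y$ cannot be bounded below by $J(x-x_0)V(x_0,\rho\w)$, the value of the integrand at a single point (a continuous $V$ concentrated near $x_0$ makes the integral arbitrarily small while that quantity stays of order one). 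The fix is immediate: since $x_0\in\oo$ and $v(\cdot,0)$ is continuous, one has $v(\cdot,0)\ge\mu>0$ on some ball $B_\eta(x_0)\subset\oo$; the ODE phase and the monotonicity of $V$ in $t$ then give $V(\cdot,t)\ge\mu'>0$ on $B_\eta(x_0)$ for all $t\in[\rw,\w]$, whence $\int_\oo J(x-y)V(y,t)\,{\rm d}y\ge\mu'\int_{B_\eta(x_0)}J(x-y)\,{\rm d}y>0$ for $x$ near $x_0$. With that repair, the chaining argument --- together with your correct observation that each link consumes only an arbitrarily small amount of time, so positivity is obtained for every $t>\rw$ and not merely for $t$ bounded away from $\rw$ --- yields $v>0$ on $\bar\oo\times(\rw,\w]$ and hence, by induction, on $\bar\oo\times(\rw,\yy)$.
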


\begin{proof} By the first equation of \qq{2.3} with $i=0$ we have
 \[v(x,t)\geq e^{-\delta t}v(x,0)\ge 0,\;\; (x,t)\in\bar\oo\times\bar{\ooo}_0^l.\]
Then $v$ satisfies
 \bess
 \left\{\begin{array}{lll}
  v_t-d\mathcal{L}[v]\geq c(x,t)v,\; &(x,t)\in\bar\oo\times\ooo_0^r,\\[1.5mm]
  v(x,\rho\w)\geq e^{-\delta\rho\w}v(x,0)\ge 0,\;\; &x\in\bar\oo.
 \end{array}\right.
 \eess
As in the proof of \cite[Lemma 2.2]{CDLL2019} we can show that $v\ge 0$ in $\bar\oo\times\bar{\ooo}_0^r$. By the inductive process, $v\geq 0$ in $\bar\oo\times[0,\yy)$.

Assume $v(x_0,0)>0$ for some $x_0\in\oo$. Then $v(x_0,t)\geq e^{-\delta t}v(x_0,0)>0$ in $\bar{\ooo}_0^l$. Hence, $v(x_0,\rw)>0$. As $v(x,\rw)\ge 0$ for $x\in\bar\oo\backslash \{x_0\}$, following the proof of \cite[Lemma 2.2]{CDLL2019} we can show that $v>0$ in $\bar\oo\times\ooo_0^r$. Utilizing the inductive method, one has $v>0$ in $\bar\oo\times(\rw,\yy)$.
\end{proof}

As a consequence of Lemma \ref{l2.1} we have the following comparison principle.

\begin{lem}\lbl{l2.2}{\rm(Comparison principle)} Let $\bar u, \ud u\in C(\bar\oo\times[0,\yy))\cap C^{0,1}(\bar\oo\times\bar{\Sigma}_i^l)\cap C^{0,1}(\bar\oo\times\bar{\Sigma}_i^r)$ be non-negative bounded functions and satisfy
 \bess
 \left\{\begin{array}{lll}
 \bar u_t\geq -\delta \bar u, \,\;&(x,t)\in\bar\oo\times\Sigma_i^l,\\[1mm]
 \bar u_t-d\mathcal{L}[\bar u]\geq f(x,t,\bar u), \,\;&(x,t)\in\bar\oo\times\Sigma_i^r
 \end{array}\right.
 \eess
and
\bess
 \left\{\begin{array}{lll}
 \ud u_t\leq-\delta \ud u, \,\;&(x,t)\in\bar\oo\times\Sigma_i^l,\\[1mm]
 \ud u_t-d\mathcal{L}[\ud u]\leq f(x,t,\ud u), \,\;&(x,t)\in\bar\oo\times\Sigma_i^r,
 \end{array}\right.
 \eess
respectively. If $\bar u(x,0)\ge\ud u(x,0)$ in $\bar\oo$, then $\bar u(x,t)\ge\ud u(x,t)$ in $\bar\oo\times[0,\yy)$. Moreover, if $\bar u(x_0,0)>\ud u(x_0,0)$ for some $x_0\in\oo$, then $\bar u(x_0,t)>\ud u(x_0,t)$ in $[0,\rw]$ and $\bar u(x,t)>\ud u(x,t)$ in $\bar\oo\times(\rw,\yy)$.
\end{lem}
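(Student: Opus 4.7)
The natural plan is to reduce this comparison statement to the maximum principle of Lemma \ref{l2.1} by working with the difference $v:=\bar u-\ud u$. First, I would check the regularity: both $\bar u$ and $\ud u$ lie in $C(\bar\oo\times[0,\yy))\cap C^{0,1}(\bar\oo\times\bar{\ool})\cap C^{0,1}(\bar\oo\times\bar{\oor})$, so $v$ does as well, and clearly $v(x,0)\ge 0$ on $\bar\oo$. Subtracting the two differential inequalities on $\bar\oo\times\ool$ yields immediately $v_t\ge -\delta v$, so the bad-season part of the hypothesis of Lemma \ref{l2.1} is in place.

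The one point that needs care is the good-season inequality. Subtraction gives
\[
v_t-d\mathcal{L}[v]\ \ge\ f(x,t,\bar u)-f(x,t,\ud u)\qquad\text{on }\ \bar\oo\times\oor,
\]
and I need to rewrite the right-hand side as $c(x,t)v$ with $c\in L^\infty(\oo\times\cup_i\oor)$. Since $\bar u$ and $\ud u$ are non-negative and bounded, I can pick $L>0$ with $0\le\bar u,\ud u\le L$ everywhere; then assumption (F2) gives a Lipschitz constant $K=K(L)$, so defining
\[
c(x,t)\ :=\ \begin{cases}\dfrac{f(x,t,\bar u(x,t))-f(x,t,\ud u(x,t))}{\bar u(x,t)-\ud u(x,t)},&\bar u(x,t)\ne\ud u(x,t),\\[2mm] 0,&\bar u(x,t)=\ud u(x,t),\end{cases}
\]
one has $\|c\|_\yy\le K$ and $f(x,t,\bar u)-f(x,t,\ud u)=c(x,t)v$ pointwise. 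Thus $v_t-d\mathcal{L}[v]\ge c(x,t)v$ on $\bar\oo\times\oor$, and all hypotheses of Lemma \ref{l2.1} hold.

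Applying Lemma \ref{l2.1} to $v$ then gives $v\ge 0$ on $\bar\oo\times[0,\yy)$, which is $\bar u\ge\ud u$. For the second conclusion, if $\bar u(x_0,0)>\ud u(x_0,0)$ for some $x_0\in\oo$, then $v(x_0,0)>0$, and the strong-positivity part of Lemma \ref{l2.1} yields $v(x_0,t)>0$ for $t\in[0,\rw]$ and $v>0$ on $\bar\oo\times(\rw,\yy)$, which translates directly into the claimed strict inequality for $\bar u$ and $\ud u$. I do not foresee a real obstacle here; the only mildly delicate point is confirming that the measurable coefficient $c(x,t)$ built from the Lipschitz quotient is indeed admissible as the coefficient in Lemma \ref{l2.1}, and this is exactly why hypothesis (F2) was formulated on bounded intervals together with the standing boundedness of $\bar u,\ud u$.
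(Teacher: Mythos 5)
Your proposal is correct and follows essentially the same route as the paper: form $v=\bar u-\ud u$, bound the nonlinear difference by a bounded coefficient times $v$ using \textbf{(F2)} with $L$ determined by the $L^\infty$ bounds of $\bar u,\ud u$, and invoke Lemma \ref{l2.1}. The only cosmetic difference is that you take $c$ to be the difference quotient (giving an equality), whereas the paper takes $c=\mp K$ according to the sign of $v$ (giving an inequality); both are admissible coefficients for Lemma \ref{l2.1}.
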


\begin{proof} Set $v(x,t)=\bar u(x,t)-\ud u(x,t)$.
Then $v$ satisfies
 \bess
 \left\{\begin{array}{lll}
 v_t\geq -\delta v, \,\;&(x,t)\in\bar\oo\times\Sigma_i^l,\\[1mm]
  v_t-d\mathcal{L}[v]\geq f(x,t, \bar u)-f(x,t,\ud u)\geq c(x,t)v,\; &(x,t)\in\bar\oo\times
  \ooo_i^r,\\[1.5mm]
  v(x,0)\geq 0, &x\in\bar\oo,
 \end{array}\right.
 \eess
 where $c(x,t)=-K$ when $\bar u(x,t)\geq \underline u(x,t)$, $c(x,t)=K$ when $\bar u(x,t)<\underline u(x,t)$, and $K$ is given by {\bf(F2)} with $L=\|\bar u\|_{L^\infty(\oo\times[0,\infty))}+\|\ud u\|_{L^\infty(\oo\times[0,\infty))}$. Clearly, $c\in L^\infty(\oo\times\cup_i\ooo_i^r)$. The desired conclusions can be deduced by Lemma \ref{l2.1}.
\end{proof}

\begin{theo}\lbl{th2.1}\, The problem \qq{1.1} has a unique global solution
 \bess
 u\in C(\bar\oo\times[0,\yy))\cap C^{0,\infty}(\bar\oo\times\bar{\Sigma}_i^l)\cap C^{0,1}(\bar\oo\times\bar{\Sigma}_i^r),\eess
and
 \bess
0<u(x,t)\le \max\left\{\max_{\bar\oo}u_0, ~K_0\right\}~\mbox{in}\;\; \bar\oo\times[0,\yy),
\eess
where $K_0$ is defined in the assumption {\bf(F3)}.
 \end{theo}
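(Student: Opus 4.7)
The plan is to construct the solution inductively on the alternating subintervals $\ooo_i^l$ and $\ooo_i^r$: on each bad interval the equation reduces to a scalar linear ODE in $t$ at each fixed $x$, while on each good interval it becomes an abstract ODE in the Banach space $C(\boo)$ driven by the bounded linear operator $d\mathcal{L}$ and the locally Lipschitz reaction $f$. Comparison with the constant $M:=\max\{\max_{\boo}u_0,K_0\}$ will then supply the a priori bound that upgrades local to global existence.

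I would start with $i=0$. On $\ooo_0^l$ the explicit formula $u(x,t)=e^{-\delta t}u_0(x)$ solves the first line of \qq{1.1} pointwise, lies in $C(\boo\times[0,\rw])\cap C^{0,\infty}(\boo\times\bar{\ooo}_0^l)$, and is strictly positive; it delivers the continuous datum $u(\cdot,\rw)=e^{-\delta\rw}u_0$ for the next step. On $\ooo_0^r$ I would recast the PDE as the fixed-point equation
\[
u(x,t)=e^{-d(t-\rw)}u(x,\rw)+\int_{\rw}^t e^{-d(t-s)}\Big(d\!\int_{\oo}J(x-y)u(y,s)\dy+f(x,s,u(x,s))\Big)\,{\rm d}s
\]
and run a Banach contraction argument in $C(\boo\times[\rw,\rw+\tau])$ for small $\tau>0$, using only the Lipschitz bound of $f$ on the ball of radius $M$ furnished by (F2). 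This yields a unique local solution in the required regularity class.

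To extend this local solution up to $t=\w$ (and then globally by iteration in $i$) I need the uniform bound $0<u\le M$. The constant $M$ is a supersolution in the sense of Lemma \ref{l2.2}: trivially $M_t=0\ge -\delta M$ on $\ool$, and on $\oor$ assumption (F3) combined with the strict decrease of $f(x,t,u)/u$ in (F1) gives $f(x,t,M)/M\le f(x,t,K_0)/K_0=0$, so $M_t-d\mathcal{L}[M]=0\ge f(x,t,M)$. Applying Lemma \ref{l2.2} with $\bar u=M$ and $\ud u=u$ forces $u\le M$ throughout the existence interval, ruling out finite-time blow-up and permitting continuation up to $\w$; uniqueness on each piece follows from the same comparison. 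Strict positivity is immediate on $\ool$ from the exponential formula and on $\oor$ follows from Lemma \ref{l2.1} applied to $v=u$ with the $L^\infty$ coefficient $c(x,t):=f(x,t,u(x,t))/u(x,t)$, whose boundedness is ensured by $0<u\le M$ together with (F2).

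Iterating the ODE step on $\ool$ and the contraction step on $\oor$ for $i=1,2,\dots$ then produces the global solution with the asserted regularity and bound. The main technical point is the a priori upper bound: without it the Picard iteration yields only local existence and one cannot traverse a full good-season window $\oor$. Consequently the pivotal algebraic step is the verification $f(x,t,M)\le 0$ from (F1) and (F3); once this is in hand the rest of the argument is a standard combination of fixed-point iteration in $C(\boo)$ with the comparison principle already established in Lemma \ref{l2.2}.
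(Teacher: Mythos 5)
Your proposal is correct and follows essentially the same route as the paper: decompose time into the alternating seasons, solve explicitly by the exponential formula on each $\Sigma_i^l$, solve the nonlocal equation on each $\Sigma_i^r$ with the datum inherited from the previous piece, glue, and obtain the bound $0<u\le\max\{\max_{\bar\Omega}u_0,K_0\}$ and uniqueness from the comparison principle. The only difference is one of detail: where the paper simply invokes that the initial value problem for $u_t-d\mathcal{L}[u]=f$ on a good-season interval ``is well known'' to have a unique positive solution with the stated bound, you supply the underlying argument (Picard iteration on the integral equation plus the supersolution $M$, justified correctly via {\bf(F1)} and {\bf(F3)}, to pass from local to global existence on the interval), which is exactly the standard machinery the paper is citing.
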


\begin{proof} {\it Step 1}. Define
  \[u^{0,l}(x,t)=e^{-\delta t}u_0(x), \,\, (x,t)\in\bar\oo\times\bar{\ooo}_0^l.\]
Then $u^{0,l}\in C^{0,\yy}(\bar{\oo}\times\bar{\ooo}_0^l)$ and
$0<u^{0,l}(x,t)\le \dd\max_{\bar\oo}u_0$ for $x\in\bar\oo$ and $t\in\bar{\ooo}_0^l$.
Clearly, $u^{0,l}$ satisfies
 \bess
 \left\{\begin{array}{lll}
 u^{0,l}_t=-\delta u^{0,l},\;\; &(x,t)\in\bar\oo\times\bar{\ooo}_0^l,\\[1mm]
 u^{0,l}(x,0)=u_0(x),\;\;  &x\in\bar\oo.
 \end{array}\right.
 \eess
It is well known that the problem
 \bess
 \left\{\begin{array}{llll}
 u_t-d{\cal L}[u]=f(x,t,u),\;\; &(x,t)\in\bar\oo\times\bar{\ooo}_0^r,\\[1mm]
 u(x,\rho\w)=u^{0,l}(x,\rho\w), &x\in\bar\oo
 \end{array}\right.
 \eess
has a unique positive solution $u^{0,r}\in C^{0,1}(\bar{\oo}\times\bar{\ooo}_0^r)$, and
 \bess
 0<u^{0,r}(x,t)\leq\max\left\{\max_{\bar\oo}u^{0,l}(\cdot,\rho\w),\, K_0\right\}\leq \max\left\{\max_{\bar\oo}u_0,\, K_0\right\}\;\;\;\text{in}\;\;\bar\oo\times\bar{\ooo}_0^r. \eess
Define
 \bess
 u^0(x,t)=\left\{\begin{array}{ll}
 u^{0,l}(x,t)\;\;\;\text{in} \;\; \bar{\oo}\times\bar{\ooo}_0^l,\\[1mm]
 u^{0,r}(x,t)\;\;\;\text{in} \;\;\bar{\oo}\times\bar{\ooo}_0^r.
 \end{array}\right.
  \eess
It then follows that
  \bess
 u^0\in C(\bar{\oo}\times[0,\w])\cap C^{0,\yy}(\bar{\oo}\times\bar{\ooo}_0^l)\cap C^{0,1}(\bar{\oo}\times\bar{\ooo}_0^r),\eess
 and
 \bess
  0<u^0(x,t)\leq \max\left\{\max_{\bar\oo}u_0, K_0\right\}\;\;\;\text{in}\;\;\bar\oo\times[0,\w].
 \eess

{\it Step 2}. For the above obtained function $u^0$, we set
  \[u^{1,l}(x,t)=e^{-\delta(t-\w)}u^0(x,\w),\,\; (x,t)\in\bar\oo\times\bar\ooo_1^l.\]
Then $ u^{1,l}\in C^{0,\yy}(\bar{\oo}\times\bar{\ooo}_1^l)$, and satisfies
 \bess
 \left\{\begin{array}{lll}
 u^{1,l}_t=-\delta u^{1,l},\;\; &(x,t)\in\bar\oo\times\bar\ooo_1^l,\\[1mm]
 u^{1,l}(x,\w)=u^0(x,\w),\;\;  &x\in\bar\oo,
 \end{array}\right.
 \eess
and
 \[0<u^{1,l}(x,t)\leq u^0(x,\w)\leq \max\left\{\max_{\bar\oo}u_0,K_0\right\} \;\;\;\mbox{in}\;\; \bar\oo\times\bar\ooo_1^l.\]
Similarly, the problem
 \bess
 \left\{\begin{array}{llll}
 u_t-d{\cal L}[u]=f(x,t,u), &(x,t)\in\bar\oo\times\bar\ooo_1^r,\\[1mm]
 u(x,(1+\rho)\w)=u^{1,l}(x,(1+\rho)\w), \;\; &x\in\bar\oo
 \end{array}\right.
 \eess
has a unique positive solution $u^{1,r}\in C^{0,1}(\bar{\oo}\times\bar{\ooo}_1^r)$, and
$$0<u^{1,r}(x,t)\leq \max\left\{\max_{\bar\oo}u^{1,l}(\cdot,(1+\rho)\w),K_0\right\}\leq  \max\left\{\max_{\bar\oo}u_0,K_0\right\} \;\;\;\mbox{in}\;\; \bar\oo\times\bar\ooo_1^r.$$
Define
 \bess\begin{array}{lll}
u^1(x,t)=\left\{\begin{array}{ll}
 u^{1,l}(x,t)\;\;\;&\text{in}\;\; \bar{\oo}\times\bar{\ooo}_1^l,\\[1mm]
 u^{1,r}(x,t)\;\;\;&\text{in}\;\; \bar{\oo}\times\bar{\ooo}_1^r.
 \end{array}\right.
  \end{array}\eess
It then follows that
 \bess
u^1\in C(\bar{\oo}\times [\w, 2\w])\cap C^{0,\yy}( \bar{\oo}\times\bar{\ooo}_1^l)
\cap C^{0,1}( \bar{\oo}\times\bar{\ooo}_1^r),
 \eess
and
\[0<u^1\leq \max\left\{\max_{\bar\oo}u_0,K_0\right\}
\;\;\;\text{in}\;\;\bar\oo\times[\w,2\w].\]

{\it Step 3}. Repeating this process, we can get a function $u$ which solves \eqref{1.1}. From the above construction we see that $u$ has the desired properties.

{\it Step 4}. The uniqueness of solutions is a consequence of the comparison principle (Lemma \ref{l2.2}).
\end{proof}

\section{Time periodic problem} \lbl{s3}
\setcounter{equation}{0} {\setlength\arraycolsep{2pt}

In this section, we mainly discuss the existence and uniqueness of positive time periodic solution to the following problem
 \bes
 \left\{\begin{array}{lll}
 U_t=-\delta U,  &(x,t)\in\bar\oo\times(0,\rw],\\[1mm]
 U_t-d\mathcal{L}[U]=f(x,t,U),\; &(x,t)\in\bar\oo\times(\rw,\w],\\[1mm]
 U(x,0)=U(x,\w),\;\;&x\in\bar\oo.
 \end{array}\right.\lbl{3.1}
 \ees
We first define an upper (a lower) solution of \eqref{3.1} as follows.

\begin{defi}\label{d3.1}\ Let $U\in C(\bar\oo\times[0,\w])\cap C^{0,\infty}(\bar\oo\times [0,\rw])\cap C^{0,1}(\bar\oo\times[\rw,\w])$  be a non-negative function. Such a function $U$ is referred to as an upper solution $($a lower solution$)$ of \eqref{3.1} if
 \bess\left\{\begin{array}{lll}
 U_t \geq\, (\leq)\,-\delta U,  \; &(x,t)\in\bar\oo\times(0,\rw],\\[1mm]
 U_t-d\mathcal{L}[U] \geq\, (\leq)\,f(x,t,U), \; &(x,t)\in\bar\oo\times(\rw,\w],\\[1mm]
 U(x,0)\geq\,(\leq) \, U(x,\w),\;\;&x\in\bar\oo.
 \end{array}\right.
 \eess
If $U$ is both the upper solution and the lower solution of \eqref{3.1}, we call that $U$ is a solution of \eqref{3.1}.
\end{defi}

Before give the main result of this section, we give the following two lemmas.

\begin{lem}\lbl{l3.1} Let $U\in L^\yy(\oo\times[0,\w])$ be a non-negative function and satisfy \eqref{3.1} pointwisely. Then $U\in C(\bar\oo\times[0,\w])\cap C^{0,\yy}(\bar\oo\times[0,\rw])\cap C^{0,1}(\bar\oo\times[\rw,\w])$.
 \end{lem}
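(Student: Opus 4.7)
The plan is to bootstrap $U$ from $L^\infty$ to continuous by combining two explicit representations with a Gronwall-type estimate closed using the periodicity $U(\cdot,0)=U(\cdot,\w)$ and the ODE-relation $U(\cdot,\rw)=e^{-\delta\rw}U(\cdot,0)$.

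First, temporal regularity. From the pointwise ODE $U_t=-\delta U$ on $(0,\rw]$, the function $t\mapsto e^{\delta t}U(x,t)$ has zero pointwise $t$-derivative, so it is constant on each connected component; hence $U(x,t)=e^{-\delta t}U(x,0)$ on $[0,\rw]$, which is $C^{0,\infty}$ in $t$. On $(\rw,\w]$, rewriting the equation as $U_t+dU=d(J*U)+f(x,t,U)$ and applying variation of constants yields
\[
U(x,t)=e^{-d(t-\rw)}U(x,\rw)+\int_{\rw}^t e^{-d(t-s)}\bigl[d(J*U)(x,s)+f(x,s,U(x,s))\bigr]\,ds.
\]
Since $U\in L^\infty$, both $J*U$ and $f(x,s,U(x,s))$ are uniformly bounded (using (F2) with $L=\|U\|_\infty$), so $U_t$ is uniformly bounded and $U$ is Lipschitz in $t$ on $[\rw,\w]$ with Lipschitz constant independent of $x$.

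Second, spatial continuity. I fix $x_0\in\bar\oo$ and set $V(x,t)=|U(x,t)-U(x_0,t)|$. Subtracting the integral representations at $x$ and $x_0$ on $[\rw,\w]$, I use uniform continuity of $J$ on compacts to bound $|(J*U)(x,s)-(J*U)(x_0,s)|\le\|U\|_\infty\int_\oo|J(x-y)-J(x_0-y)|\,dy$, uniform continuity of $f(x,s,u)$ in $x$ on the compact $\bar\oo\times[0,\w]\times[0,\|U\|_\infty]$, and the local Lipschitz assumption (F2) in $u$; a standard Gronwall argument then gives
\[
\sup_{t\in[\rw,\w]}V(x,t)\le C\,V(x,\rw)+\epsilon(|x-x_0|),
\]
where $\epsilon(r)\to 0$ as $r\to 0$. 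Chaining with $V(x,\rw)=e^{-\delta\rw}V(x,0)=e^{-\delta\rw}V(x,\w)$ (ODE plus periodicity) produces a self-bounding inequality for $\sup_{[\rw,\w]}V(x,\cdot)$ with closing factor $Ce^{-\delta\rw}$; once that factor is $<1$, $V(x,\cdot)\to 0$ uniformly in $t$ as $x\to x_0$.

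Combining uniform-in-$x$ temporal Lipschitz continuity from step one with uniform-in-$t$ spatial continuity from step two yields $U\in C(\bar\oo\times[0,\w])$, and the claimed higher temporal regularity $C^{0,\infty}$ on $[0,\rw]$ and $C^{0,1}$ on $[\rw,\w]$ then follows immediately from the explicit formulas. The main obstacle is closing the self-bounding inequality in step two: the direct Gronwall yields $C=e^{K(\w-\rw)}$, so the closing factor $e^{-\delta\rw+K(\w-\rw)}$ can exceed one when the local Lipschitz constant $K$ of $f$ is large compared with $\delta$. To handle this, I plan to partition $[\rw,\w]$ into sufficiently short subintervals on which the per-piece Gronwall factor is tamed, and chain the resulting moduli of continuity together with the ODE-plus-periodicity step to produce an overall contractive closure that forces the spatial oscillation at $x_0$ to vanish.
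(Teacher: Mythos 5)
Your step one and the reduction of spatial continuity to a self-bounding inequality are fine, but the closure of that inequality is a genuine gap, and the fix you propose cannot work. Over the good season the Gronwall amplification of $V(x,\cdot)=|U(x,\cdot)-U(x_0,\cdot)|$ from $t=\rw$ to $t=\w$ is $e^{K(1-\rho)\w}$, and this constant is invariant under partitioning: if you split $[\rw,\w]$ into pieces of lengths $\Delta_1,\dots,\Delta_m$ and chain the per-piece estimates, the coefficients multiply back to $\prod_j e^{K\Delta_j}=e^{K(1-\rho)\w}$. The only contractive ingredient, the factor $e^{-\delta\rw}$ coming from the bad-season ODE plus periodicity, is available exactly once per period and cannot be interleaved between subintervals of $(\rw,\w]$. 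Hence the closing factor is $e^{K(1-\rho)\w-\delta\rw}$, which exceeds $1$ whenever $K(1-\rho)>\delta\rho$ --- a completely generic situation, since $K$ is the Lipschitz constant of $f$ on $[0,\|U\|_{L^\yy}]$ and need not be small relative to $\delta$. Iterating over several periods only raises this factor to a power, so it does not help either. As written, your argument proves spatial continuity only under an unstated smallness condition relating $K$, $\delta$, $\rho$ and $\w$.

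The idea you are missing --- and the one the paper uses --- is to trade the quantitative contraction for compactness plus uniqueness. Your step one already gives a $t$-Lipschitz bound for $U(x,\cdot)$ uniform in $x$ (this is \eqref{3.2}), so for any sequence $x_n\to x_0$ the family $\{U(x_n,\cdot)\}$ is bounded and equicontinuous on $[0,\w]$; by the Arzel\`a--Ascoli theorem a subsequence converges in $C([0,\w])$ to some $U^*$, and passing to the limit in the integral form of the equations (using the continuity of $J$ and of $f(\cdot,t,u)$ in $x$) shows that $U^*$ solves the scalar periodic problem \eqref{3.3} at $x_0$ with the fixed forcing $d\int_\oo J(x_0-y)U(y,t)\,{\rm d}y$. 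Since that auxiliary problem has at most one solution, $U^*=U(x_0,\cdot)$, and then the whole sequence converges; this identifies the limit without ever needing $e^{K(1-\rho)\w-\delta\rw}<1$. If you prefer to stay close to your contraction idea, you would have to exploit the sign structure of $f$ from {\bf(F1)} (as in the sliding argument of Lemma \ref{l3.2}) rather than the crude two-sided Lipschitz bound; the raw Gronwall estimate alone is not strong enough.
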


\begin{proof} Since $U\in L^\yy(\oo\times[0,\w])$, we see that $U(\cdot,t)$ is Lipschitz continuous with respect to $t\in[0,\w]$, and there exists a constant $K_*>0$ such that
 \bes
 |U(x,t_1)-U(x,t_2)|\le K_*|t_1-t_2|,\;\;\forall\; x\in\boo,\; t_1, t_2\in[0,\w].
 \lbl{3.2}\ees

In the following we shall prove that $U(\cdot,t)$ is continuous in $x\in\boo$. In fact, for the fixed $x_0\in\bar\oo$, we denote $U^0(t)=U(x_0,t)$. Consider the following auxiliary time periodic problem
\bes
 \left\{\begin{array}{lll}
 V_t=-\delta V, \; &t\in(0,\rw],\\[1mm]
 V_t=-dV+f(x_0,t,V)+d\dd\int_{\oo}J(x_0-y)U(y,t){\rm d}y,\;&t\in(\rw,\w],\\[1mm]
 V(x_0,0)=V(x_0,\w).
 \end{array}\right.
 \lbl{3.3}\ees
Clearly, $U^0(t)$ is the unique solution of \eqref{3.3} because of such a problem has at most one solution. Given a sequence $\{x_n\}$ satisfying $x_n\to x_0$ as $n\to\infty$, and denote $U^n(t)=U(x_n, t)$. Then $U^n(t)$ is the unique solution of \eqref{3.3} with $x_0$ replaced by $x_n$. The estimate \qq{3.2} shows that $\{U^n(t)\}$ is uniformly bounded and equi-continuous on $[0,\w]$. By the Arzela-Ascoli's Theorem, there exist a subsequence $\{n_j\}$ of $\{n\}$ and a function $U^*\in C([0,\w])$ such that $U^{n_j}\to U^*$ in $C([0,\w])$. Letting $n_j\to\infty$ in the integral form of \qq{3.3} and using the continuities of $J(x)$ and $f(x,\cdot,\cdot)$ in $x$, we have that $U^*(t)$ satisfies \qq{3.3}. The uniqueness of solutions of \qq{3.3} implies $U^*(t)=U^0(t)$. That is, $\dd\lim_{{n_j}\to \yy}U(x_{n_j},t)=U(x_0,t)$ in $C([0,\w])$. This limit also holds for the whole sequences $\{x_n\}$ by the uniqueness of limit.

In conclusion, $U\in C(\bar\oo\times[0,\w])$. On the other hand, it is easy to see from the differential equations of \qq{3.1} that $U\in C^{0,\yy}(\bar\oo\times[0,\rw])\cap C^{0,1}(\bar\oo\times[\rw,\w])$.
\end{proof}

A non-negative function $U\in L^\yy(\oo\times[0,\w])$ satisfying \eqref{3.1} pointwisely can be regarded as a {\it weak} solution of \qq{3.1}. A non-negative function $U\in C(\bar\oo\times[0,\w])\cap C^{0,\infty}(\bar\oo\times[0,\rw])\cap C^{0,1}(\bar\oo\times[\rw,\w])$ satisfying \qq{3.1} in the classical sense is called a {\it classical} solution of \qq{3.1}. Lemma \ref{l3.1} shows that {\it weak} solution and {\it classical} solution are equivalent.

\begin{lem}\lbl{l3.2}{\rm(Comparison principle)} Let $\bar U$ and $\ud U$ be the upper and  lower solutions of \eqref{3.1}, respectively. If $\bar U$ and $\ud U$ are positive in $\bar\oo\times[0,\w]$, then $\bar U\ge \ud U$ in $\bar\oo\times[0,\w]$.
\end{lem}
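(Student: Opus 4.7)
The plan is to run a scaling/supremum argument that exploits the strict sub-homogeneity encoded in \textbf{(F1)}. Because $\bar U$ and $\ud U$ are positive continuous functions on the compact set $\bar\oo\times[0,\w]$, they are bounded above and bounded away from $0$, so
$$k^*:=\sup\{k>0:\ k\ud U\le\bar U\ \text{in}\ \bar\oo\times[0,\w]\}$$
is a well-defined positive real number, and $k^*\ud U\le\bar U$ throughout by continuity. The goal is to show $k^*\ge 1$; I argue by contradiction and suppose $k^*<1$.

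Set $w=\bar U-k^*\ud U\ge 0$. From Definition \ref{d3.1}, the upper/lower solution inequalities combine linearly to give $w_t\ge-\delta w$ on $\bar\oo\times(0,\rw]$, $w_t-d\mathcal{L}[w]\ge f(x,t,\bar U)-k^*f(x,t,\ud U)$ on $\bar\oo\times(\rw,\w]$, and $w(x,0)\ge w(x,\w)$ on $\bar\oo$ (the last because $\bar U(x,0)\ge\bar U(x,\w)$, $\ud U(x,0)\le\ud U(x,\w)$, and $k^*>0$). To extract a positive forcing term, introduce $g(x,t,u):=f(x,t,u)/u$ and use the decomposition
$$f(x,t,\bar U)-k^*f(x,t,\ud U)=\big[f(x,t,\bar U)-f(x,t,k^*\ud U)\big]+k^*\ud U\big[g(x,t,k^*\ud U)-g(x,t,\ud U)\big].$$
The first bracket is $\ge-Kw$ by the local Lipschitz condition \textbf{(F2)} with $L=\max(\|\bar U\|_\infty,\|\ud U\|_\infty)$. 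The second bracket is continuous and strictly positive on the compact set $\bar\oo\times[\rw,\w]$: strict because $k^*<1$ forces $k^*\ud U<\ud U$ pointwise and $g$ is strictly decreasing in its last variable by \textbf{(F1)}; continuous because $\ud U$ stays bounded below by a positive constant, so $g$ is evaluated only on a compact subset of the positive reals. Compactness then yields a uniform $\eta_0>0$ such that the whole right-hand side is $\ge-Kw+\eta_0$.

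Combining, on $\bar\oo\times(\rw,\w]$,
$$w_t\ \ge\ d(J*w)-(d+K)w+\eta_0\ \ge\ -(d+K)w+\eta_0,$$
using $w\ge 0$ and $J\ge 0$. A scalar integrating-factor computation along each fixed $x$, started from $w(x,\rw)\ge 0$, produces a uniform constant $\alpha>0$ with $w(x,\w)\ge\alpha$ for every $x\in\bar\oo$. The periodic inequality $w(x,0)\ge w(x,\w)$ upgrades this to $w(x,0)\ge\alpha$ on $\bar\oo$, and the ODE bound $w(x,t)\ge e^{-\delta t}w(x,0)$ then gives $w\ge\alpha e^{-\delta\rw}>0$ on $[0,\rw]$ as well. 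Hence $w\ge\epsilon_0>0$ uniformly on $\bar\oo\times[0,\w]$, which forces $\bar U\ge\bigl(k^*+\epsilon_0/\|\ud U\|_\infty\bigr)\ud U$, contradicting the maximality of $k^*$. Therefore $k^*\ge 1$ and $\bar U\ge\ud U$ on $\bar\oo\times[0,\w]$.

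The genuinely delicate step is producing the uniform $\eta_0>0$: the monotonicity in \textbf{(F1)} is purely qualitative, so converting it into a quantitative lower bound requires both compactness of $\bar\oo\times[\rw,\w]$ and the hypothesis that $\bar U,\ud U$ are strictly positive (so $\ud U$ is bounded away from $0$ and $g$ remains continuous on the values used). Everything downstream—the scalar Gronwall estimate on the growing season and the propagation through the dormant season via the ODE—is routine once this strict forcing is established.
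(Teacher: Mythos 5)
Your proof is correct, and it opens exactly as the paper's does: both arguments take the optimal scaling constant (called $\sigma$ in the paper, $k^*$ by you) for which $\bar U-k^*\ud U\ge 0$ is about to fail, assume it is $<1$, and exploit the strict decrease of $f(x,t,u)/u$ from \textbf{(F1)} to turn the upper/lower-solution inequalities into a strict differential inequality for the difference. Where you genuinely diverge is in how the contradiction is extracted. The paper localizes at a contact point $(x_0,t_0)$ where $V=\bar U-\sigma\ud U$ vanishes: if $t_0$ lies in the growing season, the sign of $V_t$ at a zero of a nonnegative function clashes with $V_t(x_0,t_0)>d\int_\oo J(x_0-y)V(y,t_0)\,{\rm d}y\ge 0$; if $t_0$ lies in the dormant season, the ODE inequality together with periodicity pushes the zero to $t=\w$ and reduces to the first case. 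You instead make the sub-homogeneity quantitative: compactness of $\bar\oo\times[\rho\w,\w]$ together with the strict positivity of $\ud U$ (so that $g=f/u$ is only evaluated on a compact subset of $(0,\yy)$) yields a uniform forcing term $\eta_0>0$, and an integrating-factor estimate converts this into a uniform lower bound $w\ge\epsilon_0>0$, contradicting the maximality of $k^*$ directly. Your route requires the extra continuity/compactness check for $\eta_0$ (which you correctly identify as the delicate step and which does hold under the paper's hypotheses), but it buys a case-free conclusion with no contact-point analysis and, as a by-product, the quantitative gap $\bar U\ge\bigl(k^*+\epsilon_0/\|\ud U\|_{L^\yy}\bigr)\ud U$. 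The only cosmetic omission is that your uniform bound on $w$ over the interior of $(\rho\w,\w)$, not just at $t=\w$ and on $[0,\rho\w]$, needs one more forward Gronwall step from $t=\rho\w$; this is immediate.
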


\begin{proof}\, This proof is inspired by \cite[Proposition 4.1]{SV19}. As $\bar U$ and $\ud U$ are positive in $\bar\oo\times[0,\w]$, we can find a constant $\sigma>0$ such that the function $V=\bar U-\sigma\ud U$ satisfies $V\ge 0$ in $\bar\oo\times[0,\w]$ and $V(x_0,t_0)=0$ for some $(x_0,t_0)\in\bar\oo\times[0,\w]$. If $\sigma\ge 1$, then the conclusion holds. We assume $\sigma<1$. By the condition {\bf(F1)} we see that $V$ satisfies
 \bess
 V_t &\geq&-\delta V,  \;\; (x,t)\in\bar\oo\times(0,\rw],\\[1mm]
 V_t-d\mathcal{L}[V]&\geq& f(x,t,\bar U)-\sigma f(x,t,\ud U)>f(x,t,\bar U)
 -f(x,t,\sigma\ud U),\;\; (x,t)\in\bar\oo\times(\rw,\w],\\[1mm]
  V(x,0)&\geq& V(x,\w),\;\;x\in\bar\oo.
  \eess
It then follows that
 \[V(x_0,t)\ge e^{-\delta t}V(x_0,0),\;\; t\in(0,\rw].\]
If $t_0\in(\rw,\w]$, then $V_t(x_0,t_0)\le 0$. However,
 \[V_t(x_0,t_0)>d\mathcal{L}[V](x_0,t_0)=d\int_{\oo}J(x_0-y)V(y,t_0){\rm d}y\ge 0,\]
which is a contradiction. This also shows that $V>0$ in $\bar\oo\times(\rw,\w]$. If $t_0\in[0,\rw]$, then $0=V(x_0,t_0)\ge e^{-\delta t_0}V(x_0,0)$, i.e., $V(x_0,0)\le 0$. So $V(x_0,\w)\le 0$. By our assumption, $V(x_0,\w)=0$. This becomes the case that $t_0\in(\rw,\w]$, and we have obtained a contradiction.
\end{proof}

Now, we shall use the monotone iterative method to prove the existence of solution of \eqref{3.1}.

\begin{theo}\label{th3.1}{\rm(The upper and lower solutions method)}\, Suppose that $\underline U$ is a positive lower solution of \eqref{3.1}. Then the problem \eqref{3.1} has a unique solution $U$ which satisfies
 $$\underline{U}\leq U\leq K_0\ ~~\mbox{in}\, \ \bar\oo\times[0,\w],$$
where $K_0$ is given in the assumption {\bf(F3)}.
\end{theo}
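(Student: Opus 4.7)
The plan is to run a two-sided monotone iteration based on the Poincar\'e map. First I would verify that the constant $\bar U\equiv K_0$ is a positive upper solution of \eqref{3.1}: the condition $\bar U_t\ge -\delta\bar U$ reduces to $0\ge -\delta K_0$, the condition $\bar U_t - d\mathcal{L}[\bar U]\ge f(x,t,K_0)$ reduces to $0\ge 0$ by {\bf(F3)}, and the periodicity $\bar U(x,0)=\bar U(x,\w)$ is trivial. Applying Lemma \ref{l3.2} to the positive pair $(\ud U, K_0)$ then yields $\ud U\le K_0$ on $\bar\oo\times[0,\w]$.

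Next, introduce the Poincar\'e operator $\Phi:C(\bar\oo)\to C(\bar\oo)$ defined by $\Phi(u_0)=u(\cdot,\w;u_0)$, where $u(\cdot,\cdot;u_0)$ is the unique solution of \eqref{1.1} given by Theorem \ref{th2.1}. Lemma \ref{l2.2} shows $\Phi$ is order preserving. Set $V_0=K_0$, $V_{n+1}=\Phi(V_n)$, and $W_0=\ud U(\cdot,0)$, $W_{n+1}=\Phi(W_n)$. The upper-solution inequality for $K_0$ together with Lemma \ref{l2.2} gives $V_1\le V_0$; the lower-solution relation $\ud U(\cdot,0)\le \ud U(\cdot,\w)$ combined with Lemma \ref{l2.2} applied to the pair $\bigl(u(\cdot,\cdot;\ud U(\cdot,0)),\ud U\bigr)$ gives $W_1\ge W_0$. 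Order preservation propagates these to $V_n\downarrow$, $W_n\uparrow$ with $\ud U(\cdot,0)\le W_n\le V_n\le K_0$ for every $n$, so pointwise limits $V^*$ and $W^*$ exist on $\bar\oo$.

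I would then show that each limit generates a time-periodic solution. Let $U_V$ solve \eqref{1.1} with initial datum $V^*$; by the $\w$-periodicity of $f$ in $t$, the shifted sequence $u^{(n)}(x,t):=u(x,t+n\w;K_0)$ satisfies the same evolution problem on $[0,\w]$ with $u^{(n)}(\cdot,0)=V_n\downarrow V^*$. Rewriting the equation in its variation-of-constants/integral form and using dominated convergence inside the nonlocal term $\int_\oo J(x-y)u^{(n)}(y,t)\,dy$, I pass to the limit to obtain $u^{(n)}\to U_V$ pointwise on $\bar\oo\times[0,\w]$, and in particular $U_V(\cdot,\w)=\lim V_{n+1}=V^*$, so the $\w$-periodic extension of $U_V$ solves \eqref{3.1}. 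The same construction starting from $\ud U(\cdot,0)$ produces a periodic solution $U_W$ with $U_W\ge \ud U$.

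Uniqueness is then immediate from Lemma \ref{l3.2}: any two positive classical solutions of \eqref{3.1} are simultaneously upper and lower solutions of each other, hence coincide. In particular $U_V=U_W=:U$, and the sandwich $\ud U\le U\le K_0$ is inherited from the corresponding bounds on $U_W$ and $U_V$. I expect the main obstacle to be justifying the passage $u^{(n)}\to U_V$: the monotone iteration only yields pointwise convergence of the initial data, so the argument must exploit the order-preserving structure of the nonlocal evolution (and monotone/dominated convergence in the integral formulation) rather than a continuous-dependence estimate whose Lipschitz constant could be large.
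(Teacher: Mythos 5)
Your argument is correct, but it is built on a genuinely different iteration from the one in the paper. The paper's proof runs the classical linearized monotone scheme: it defines $\underline U_k$ as the solution of the auxiliary problems \eqref{3.4}--\eqref{3.5}, in which the equation on $(\rho\omega,\omega]$ is \emph{linear} in the new iterate (the term $+K\underline U_k$ on the left and $f(x,t,\underline U_{k-1})+K\underline U_{k-1}$ on the right, with $K$ the Lipschitz constant from {\bf(F2)}), takes $\underline U_k(x,0)=\underline U_{k-1}(x,\omega)$, and obtains monotonicity of the sequence directly from the maximum principle (Lemma \ref{l2.1}) applied to differences of consecutive iterates. You instead iterate the Poincar\'e map $\Phi(u_0)=u(\cdot,\omega;u_0)$ of the full nonlinear problem \eqref{1.1} from the two endpoints $K_0$ and $\underline U(\cdot,0)$, getting monotonicity from the comparison principle (Lemma \ref{l2.2}); this avoids introducing the shift $K$ but leans on the nonlinear well-posedness of Theorem \ref{th2.1}, which is available. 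Amusingly, your construction is exactly the one the paper itself deploys later, in the proof of Theorem \ref{th4.1} (the sequences $z^i$, $\underline z^i$, $\bar z^i$), so nothing in it is foreign to the paper's toolkit. The limit passage is handled identically in both arguments (monotone limits, dominated convergence in the integral formulation, then Lemma \ref{l3.1} to upgrade the bounded pointwise limit to a classical solution); one small streamlining of your write-up is that you need not identify the limit of $u^{(n)}$ with the solution $U_V$ of an initial value problem at all --- since $u^{(n)}(\cdot,0)=V_n$ and $u^{(n)}(\cdot,\omega)=V_{n+1}$ both converge to $V^*$, the pointwise limit already satisfies \eqref{3.1} in the weak sense and Lemma \ref{l3.1} does the rest, which sidesteps the uniqueness-for-merely-bounded-data issue you flag as the ``main obstacle.'' Uniqueness via Lemma \ref{l3.2} is the same in both proofs.
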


\begin{proof}\,{\it Step 1}. It is obvious that $\bar U=K_0$ is a positive upper solution of \eqref{3.1}. And then $\bar U\ge \ud U$ in $\bar\oo\times[0,\w]$ by Lemma \ref{l3.2}.  We consider the following initial value problems
  \bes
 \left\{\begin{array}{lll}
 \ud U_{1t}=-\delta\ud U_1, \,\;&(x,t)\in\bar\oo\times(0,\rw],\\[1mm]
 \ud U_{1t}-d\mathcal{L}[\ud U_1]+K \ud U_1=f(x,t,\ud U)+K\ud U,\;\; &(x,t)\in\bar\oo\times(\rw,\w],\\[1mm]
 \ud U_1(x,0)=\ud U(x,\w),\;\;&x\in\bar\oo
 \end{array}\right.\lbl{3.4}
 \ees
 and
  \bes
 \left\{\begin{array}{lll}
 \ud U_{kt}=-\delta \ud U_k,  &(x,t)\in\bar\oo\times(0,\rw],\\[1mm]
 \ud U_{kt}-d\mathcal{L}[\ud U_k]+K\ud U_k=f(x,t,\ud U_{k-1})+K\ud U_{k-1},\;\;&(x,t)\in\bar\oo\times(\rw,\w],\\[1mm]
 \ud U_k(x,0)=\ud U_{k-1}(x,\w),\;\;&x\in\bar\oo, \;\;k=2,3,\cdots,
 \end{array}\right.
 \lbl{3.5}\ees
where $K$ is defined as in the assumption {\bf(F2)} with $L=K_0+1$. Adapting the method of Theorem \ref{th2.1}, we can easily see that \eqref{3.4} has a unique positive solution $\ud U_1$. Applying the inductive method, problem \eqref{3.5} admits a unique positive solution $\ud U_k$.

 {\it Step 2}. We claim that
\bes
\ud U\leq \ud U_1\leq \cdots \leq \ud U_{k-1}\leq \ud U_k\leq \cdots\leq\bar U,\ \ \ (x,t)\in\bar\oo\times[0,\w].
\lbl{3.6}\ees
In fact, let $W=\ud U_1-\ud U$, a direct calculation shows that $W(x,0)=\ud U(x,\w)-\ud U(x,0)\ge 0$ in $\bar\oo$, and
\bess
 W_t+\delta W&=&\ud U_{1t}+\delta \ud U_1-(\ud U_t+\delta \ud U)\geq 0,\;\; (x,t)\in\bar\oo\times(0,\rw],\\[1mm]
 W_t-d\mathcal{L}[W]+KW&=&\ud U_{1t}-d\mathcal{L}[\ud U_1]+K\ud U_1-(\ud U_t-d\mathcal{L}[\ud U]+K\ud U)\\[1mm]
 &\geq& f( x,t,\ud U)+K\ud U-(f(x,t,\ud U)+K\ud U)=0,\;\; (x,t)\in\bar\oo\times(\rw,\w].
 \eess
Utilizing Lemma \ref{l2.1}, we have $W\geq 0$ in $\bar\oo\times[0,\w]$. That is $\ud U\leq \ud U_1$ in $\bar\oo\times[0,\w]$. Particularly,
\bess
\ud U_1(x,0)=\ud U(x,\w)\leq \ud U_1(x,\w), \ \ x\in \bar\oo.
\eess
We then apply Lemma \ref{l2.1} to $\ud U_1$ and $\ud U_2$  to derive that $\ud U_1\leq \ud U_2$ in $\bar\oo\times[0,\w]$. Thus, $\ud U_2(x,0)=\ud U_1(x,\w)\leq \ud U_2(x,\w)$ in $\bar\oo$. Repeating the above process we obtain
\bes
\ud U_{k-1}\leq \ud U_k, \;\; \forall\; (x,t)\in\bar\oo\times[0,\w],\;\;k\geq 1.
\lbl{3.7}\ees

Let $\tilde W=\bar U-\ud U_1$. Then $\tilde W(x,0)=\bar U(x,0)-\ud U(x,\w)\geq 0$ for $x\in\bar\oo$, and
\bess
 \tilde W_t+\delta \tilde W&=&\bar U_{t}+\delta \bar U-(\ud U_{1t}+\delta \ud U_1)\ge 0,\;\; (x,t)\in\bar\oo\times(0,\rw],\\[1mm]
 \tilde W_t-d\mathcal{L}[\tilde W]+K\tilde W&=&\bar U_{t}-d\mathcal{L}[\bar U]+K\bar U-(\ud U_{1t}-d\mathcal{L}[\ud U_1]+K\ud U_1)\\[1mm]
 &\ge&f(x,t,\bar U)+K\bar U-(f(x,t,\ud U)+K\ud U)\geq 0,\,\; (x,t)\in\bar\oo\times(\rw,\w].
 \eess
In view of Lemma \ref{l2.1}, we have $\tilde W\geq 0$, that is, $\ud U_1\leq \bar U$ in $\bar\oo\times[0,\w]$. Taking advantage of the inductive method, we obtain $\ud U_k\le\bar U$ in $\bar\oo\times[0,\w]$ for all $k\geq 2$. This, together with \eqref{3.7}, yields that \qq{3.6} holds.

{\it Step 3}.\, By the monotonicity and boundedness of $\{\ud U_k\}$, there exists a positive function $U_*$ such that $\ud U_k\to U_*$ pointwisely in $\bar\oo\times [0,\w]$ as $k\to\infty$. As $\ud U_k(x,0)=\ud U_{k-1}(x,\w)$, we have
  \[U_*(x,0)=U_*(x,\w).\]
It follows from \qq{3.5} that $\ud U_k$ satisfies
 \bess
 \ud U_k(x,t)&=&e^{-\delta t}\ud U_{k-1}(x,\w),\;\; (x,t)\in\bar\oo\times(0,\rho\w],\\
  \ud U_k(x,t)&=&\ud U_k(x,\rw)+\dd\int_{\rho\w}^t\big[d\mathcal{L}[\ud U_k] -K(\ud U_k-\ud U_{k-1})+f(x,s,\ud U_{k-1})\big]{\rm d}s,\;\; (x,t)\in\bar\oo\times(\rw,\w].
 \eess
Taking $k\to\yy$ we then obtain that $U_*$ satisfies, by the {\it dominated convergence theorem},
 \bess
 U_*(x,t)&=&e^{-\delta t}U_*(x,\w),\;\; (x,t)\in\bar\oo\times(0,\rho\w],\\
 U_*(x,t)&=&U_*(x,\rw)+\dd\int_{\rho\w}^t\big[d\mathcal{L}[U_*]+f(x,s,U_*)\big]{\rm d}s,\;\; (x,t)\in\bar\oo\times(\rw,\w].
 \eess
It is easy to see that $U_*$ satisfies the first two differential equations of \qq{3.1}. Furthermore, $U_*\in C(\bar\oo\times[0,\w])\cap C^{0,\infty}(\bar\oo\times[0,\rw])\cap C^{0,1}(\bar\oo\times[\rw,\w])$ by Lemma \ref{l3.1}.
Thus $U_*$ is a positive solution of \qq{3.1}.

{\it Step 4}. The uniqueness of positive solution can be deduced by Lemma \ref{l3.2}.
\end{proof}

\section{Dynamical properties of \qq{1.1}}
\setcounter{equation}{0} {\setlength\arraycolsep{2pt}

 From now on, we always assume that $f$ is continuously differentiable with respect to $u\ge 0$ and
 \[f_u(x,t,0)=a(t)+b(x),\]
where $a\in C(\bar{\Sigma}_i^r)$, $a(t+\w)=a(t)$ for $t\in\Sigma_i^r$ and $b\in C(\bar\oo)$. Let
$a^*(t)=a(t)-\bar a$ and $b^*(x)=b(x)+\bar a$, where
\[\bar{a}=\frac{1}{(1-\rho)\w}\int^{w}_{\rho\w}a(s){\rm d}s.\]
Obviously, $\bar{a}^*=0$. Denote $a^*$ and $b^*$ by $a$ and $b$, respectively. Thus, we may assume that $\bar a=0$.

To study the dynamical properties of \qq{1.1}, we first deal with the $\w$-periodic (in time $t$) parabolic eigenvalue problem
\bes
 \left\{\begin{array}{lll}
 \vp_t=-\delta \vp+\lm\vp,  &(x,t)\in\bar\oo\times(0,\rw],\\[1mm]
 \vp_t-d\mathcal{L}[\vp]-[a(t)+b(x)]\vp=\lm\vp,\;\;&(x,t)\in\bar\oo\times(\rw,\w],\\[1mm]
\vp(x,0)=\vp(x,\w),\;\;&x\in\bar\oo.
 \end{array}\right.\lbl{4.1}
 \ees

Define
 $$\lambda_p(\oo)=\sup_{\phi\in C(\bar\Omega),\;\phi>0\;{\rm in}\;\bar\Omega}\;\inf_{x\in\Omega}\frac{-d\mathcal{L}[\phi](x)-b(x)\phi(x)}{\phi(x)},$$
and set $b_m=\min_{\bar\Omega}b$,  $b_M=\max_{\bar\Omega}b$. We assume that one of the following holds:
\begin{enumerate}[leftmargin=4em]
\item[\bf(H1)] $J$ is supported in $B_\gamma(0)$ for some $\gamma>0$, and $(b_M-b)^{-1}\not\in L^1(\Omega)$;
\item[\bf(H2)] $\dd\inf_{y\in\Omega}\int_\Omega J(x-y){\rm d}x>\frac{b_M-b_m}d$;
\item[\bf(H3)]there exists a measurable subset $A\subset\Omega$ such that
 $\dd\inf_{y\in A}\int_A\frac{J(x-y)}{b_M-b(x)}{\rm d}x>\frac 1d$.
   \end{enumerate}
Then $\lambda_p(\oo)$ is a principal eigenvalue of
 \[ -d\mathcal{L}[\phi]-b(x)\phi=\lm\phi,\;\; x\in\bar\oo\]
with positive eigenfunction $\phi_p\in C(\bar\Omega)$
(see \cite{C2010, SX15, BCV2016, LCWdcds17}, for example), and some properties of $\lambda_p(\oo)$ were given in there.

Now we investigate the principal eigenvalue of \qq{4.1}. Set
 $$\sigma(t)=\delta\;\;\text{in}\;\;(0,\rw], \;\;\;
 \sigma(t)=\lambda_p(\oo)-a(t)\;\;\text{in}\;\;(\rw,\w],$$
and
\bess
\lambda_p^\w(\oo)=\delta\rho+(1-\rho)\lambda_p(\oo),\;\;
 \vp_p(x,t)=\exp\left\{\lambda_p^\w(\oo)t-\int_0^t\sigma(s){\rm d}s\right\}\phi_p(x).
\eess
It is easy to verify that $\vp_p$ satisfies \qq{4.1} with $\lm=\lambda_p^\w(\oo)$.
This shows that $\lambda_p^\w(\oo)$ is a principal eigenvalue of \qq{4.1} with positive eigenfunction $\vp_p\in C(\bar\Omega\times[0,\w])$.

\begin{theo}\lbl{th4.1} Let $u(x,t)$ be the unique positive solution of \eqref{1.1}.

{\rm(i)} If $\lambda_p^\w(\oo)\geq 0$, then $0$ is the unique non-negative solution of \eqref{3.1}, and
 \bes
 \lim_{t\to \infty}u(x,t)=0\;\;\;\text{in}\;\;C(\bar\oo).
 \lbl{4.2}\ees

{\rm(ii)} If $\lambda_p^\w(\oo)<0$, then \eqref{3.1} has a unique positive solution $U(x,t)$, and
 \bes
 \lim_{i\to \infty}u(x,t+i\w)=U(x,t)\;\;\;\text{in}\;\;C(\bar\oo\times[0,\w]).
 \lbl{4.3}\ees
\end{theo}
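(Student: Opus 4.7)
The plan is to use the periodic principal eigenfunction $\vp_p$ as a universal comparison function, bracketing the orbit of \qq{1.1} by appropriately scaled multiples of it and then invoking the comparison principles (Lemmas \ref{l2.2} and \ref{l3.2}) together with the upper-and-lower-solutions machinery of Theorem \ref{th3.1}.

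\textbf{A sub/super-solution dictionary.} Substituting into \qq{4.1} and using $f(x,t,u)\le[a(t)+b(x)]u$ (which follows from \textbf{(F1)}), one verifies: if $\lambda_p^\w(\oo)\ge 0$, then $M\vp_p(x,t)$ is a positive upper solution of \qq{3.1} for every $M>0$, and $Me^{-\lambda_p^\w(\oo)t}\vp_p(x,t)$ is an upper solution of the IVP \qq{1.1}; if $\lambda_p^\w(\oo)<0$, then $\eps\vp_p(x,t)$ is a positive lower solution of \qq{3.1} for every sufficiently small $\eps>0$, the good-season inequality being obtained by writing $f(x,t,\eps\vp_p)=[a(t)+b(x)]\eps\vp_p+o(\eps)$ and absorbing the remainder into the strictly negative term $\lambda_p^\w(\oo)\eps\vp_p$.

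\textbf{Proof of (i).} First I rule out any positive $\w$-periodic solution: such a $U$ would be a positive lower solution of \qq{3.1}, and pairing it with the upper solution $M\vp_p$ via Lemma \ref{l3.2} gives $M\vp_p\ge U$ on $\bar\oo\times[0,\w]$ for every $M>0$; sending $M\to 0^+$ forces $U\le 0$, a contradiction. For \qq{4.2}, choose $M\ge\max\{K_0,\max_{\bar\oo}u_0\}$, so that the constant $M$ is itself an upper solution of \qq{3.1} (use \textbf{(F3)} and the monotonicity of $u\mapsto f/u$) and dominates $u_0$. Let $\bar u$ be the solution of \qq{1.1} with $\bar u(x,0)=M$. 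Lemma \ref{l2.2} gives $u\le\bar u\le M$; moreover, because the coefficients of \qq{1.1} are $\w$-periodic in $t$, the shifted function $\bar u(x,t+\w)$ solves \qq{1.1} with initial datum $\bar u(x,\w)\le M=\bar u(x,0)$, whence $\bar u(x,t+\w)\le\bar u(x,t)$ by another application of Lemma \ref{l2.2}. Thus $\{\bar u(\cdot,t+n\w)\}_n$ is pointwise non-increasing. Repeating the Duhamel/dominated-convergence argument of Step 3 of Theorem \ref{th3.1}, combined with Lemma \ref{l3.1} to obtain continuity, its limit $U_\infty$ is a non-negative classical $\w$-periodic solution of \qq{3.1}; by the non-existence proved above, $U_\infty\equiv 0$, and the squeeze $0<u\le\bar u$ yields \qq{4.2}.

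\textbf{Proof of (ii).} The function $\eps\vp_p$ is a positive lower solution of \qq{3.1}, so Theorem \ref{th3.1} produces the unique positive $\w$-periodic solution $U$ with $\eps\vp_p\le U\le K_0$; uniqueness among \emph{all} positive periodic solutions follows directly from Lemma \ref{l3.2} applied symmetrically. For \qq{4.3}, pick $\eps>0$ small and $M\ge K_0$ large so that $\eps\vp_p(x,0)\le u_0(x)\le M$ on $\bar\oo$ (possible because $\bar\oo$ is compact and $u_0>0$), and denote by $u^\eps$, $u^M$ the solutions of \qq{1.1} with those initial data. Mirroring the monotonicity argument of (i) — but now using $\eps\vp_p$ as a stationary lower solution and $M$ as a stationary upper solution — one finds that $u^\eps(x,t+n\w)$ is non-decreasing and $u^M(x,t+n\w)$ is non-increasing in $n$, both trapped in $[\eps\vp_p,K_0]$. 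Passing to the limit in the Duhamel form as before, each limit is a positive $\w$-periodic solution of \qq{3.1}, hence equals $U$ by uniqueness. Since Lemma \ref{l2.2} delivers $u^\eps\le u\le u^M$, the squeeze yields \qq{4.3}.

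\textbf{Main obstacle.} The most delicate step is upgrading the pointwise monotone limits of $\bar u(\cdot,t+n\w)$ (resp.\ $u^\eps$, $u^M$) to bona fide $C(\bar\oo\times[0,\w])$-solutions of the periodic problem \qq{3.1}. This is done by passing to the limit in the Duhamel representation of \qq{1.1} over a single period (dominated convergence), exactly as in Step 3 of the proof of Theorem \ref{th3.1}, and then invoking Lemma \ref{l3.1} to obtain continuity in $x$. Once this regularity step is granted, the remainder reduces cleanly to the comparison principles already established.
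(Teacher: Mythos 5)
Your proof is correct and follows the paper's own argument almost step for step: in (i) the constant $M$ generates a decreasing sequence of period-shifts of the solution whose limit is a non-negative periodic solution of \eqref{3.1}, hence $0$; in (ii) $\ep\vp_p$ and $M$ sandwich the orbit, Theorem \ref{th3.1} supplies existence, and Lemma \ref{l3.2} supplies uniqueness. The one place you genuinely deviate is the non-existence step in (i): the paper runs a direct touching-point argument on $w=\vp_p-\sigma U$, whereas you observe that $M\vp_p$ is a positive upper solution of \eqref{3.1} for every $M>0$ (using $\lambda_p^\w(\oo)\ge 0$ and $f(x,t,s)<[a(t)+b(x)]s$ from {\bf(F1)}) and invoke Lemma \ref{l3.2} before letting $M\to 0^+$. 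This is legitimate and slightly cleaner as a matter of exposition, though nothing essentially new: Lemma \ref{l3.2}'s proof is exactly the touching argument the paper writes out. Two small points to tighten. First, the theorem asserts that $0$ is the unique \emph{non-negative} solution of \eqref{3.1}, and Lemma \ref{l3.2} requires the lower solution to be positive; so you must first note (as the paper does via Lemma \ref{l2.1} and periodicity) that any non-negative, not identically zero periodic solution is strictly positive on $\bar\oo\times[0,\w]$ --- the same remark is needed when you conclude that the limit $U_\infty$ of the shifts vanishes identically. Second, the squeeze gives convergence in $C(\bar\oo)$ only after the pointwise monotone convergence of the shifts to the continuous limit is upgraded to uniform convergence; the paper does this by Dini's theorem, and you should say so explicitly rather than only addressing the regularity of the limit.
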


\begin{proof} (i) Assume $\lambda_p^\w(\oo)\geq 0$. We first prove that $0$ is the unique non-negative solution of \eqref{3.1}. Assume on the contrary that $U\ge 0,\,\not\equiv 0$ is a solution of \eqref{3.1}.
It is easy to show that $U(x,t)>0$ in $\bar\oo\times[0,\w]$ by the maximum principle (Lemma \ref{l2.1}). By the condition {\bf(F1)}, we obtain
 $$f(x,t, U(x,t))<f_u(x,t, 0)U(x,t)=[a(t)+b(x)]U(x,t)\;\;\; \text{in}\;\;\bar\oo\times[\rw,\w].$$
Thus, $U$ satisfies
 \bess
 \left\{\begin{array}{lll}
 U_t=-\delta U,  &(x,t)\in\bar\oo\times(0,\rw],\\[1mm]
 U_t-d\mathcal{L}[U]<[a(t)+b(x)]U,\;\;&(x,t)\in\bar\oo\times(\rw, \w],\\[1mm]
  U(x,0)=U(x,\w),&x\in\bar\oo.
 \end{array}\right.
 \eess
Since $(\lambda_p^\w(\oo),\vp_p)$ satisfies \qq{4.1} and $\lambda_p^\w(\oo)\geq 0$, it yields
 \bess
 \left\{\begin{array}{lll}
 \vp_{pt}\ge-\delta \vp_p, &(x,t)\in\bar\oo\times(0,\rw],\\[1mm]
 \vp_{pt}-d\mathcal{L}[\vp_p]-[a(t)+b(x)]\vp_p\ge 0, \;\;&(x,t)\in\bar\oo\times(\rw, \w],\\[1mm]
  \vp_p(x,0)=\vp_p(x,\w),\;\;&x\in\bar\oo.
 \end{array}\right.
 \eess
We can find a constant $\sigma>0$ such that the function $w=\vp_p-\sigma U$ satisfies $w\geq 0$ in $\bar\oo\times[0,\w]$, and $w(x_0,t_0)=0$ for some $(x_0,t_0)\in \bar\oo\times[0,\w]$. Then $w$ satisfies
\bes
 \left\{\begin{array}{lll}
 w_t\ge-\delta w,  &(x,t)\in\bar\oo\times(0,\rw],\\[1mm]
 w_t-d\mathcal{L}[w]-[a(t)+b(x)]w>0,\;\;&(x,t)\in\bar\oo\times(\rw, \w],\\[1mm]
  w(x,0)=w(x,\w),\;\;&x\in\bar\oo.
 \end{array}\right.
 \lbl{4.4}\ees
If $t_0\in(\rw,\w]$, then $w_t(x_0,t_0)\leq 0$. However, with the second inequality of \eqref{4.4},
 \[w_t(x_0,t_0)>d\mathcal{L}[w](x_0,t_0)\geq 0.\]
 This is a contradiction. Thus, $t_0\in[0,\rw]$ and $w>0$ in $\bar\oo\times(\rw, \w]$.
In view of the first inequality of \eqref{4.4}, it follows that
 \[0=w(x_0, t_0)\ge e^{-\delta t_0} w(x_0, 0)=e^{-\delta t_0}w(x_0,\w)>0.\]
This is a  contradiction. Therefore, $0$ is the unique non-negative solution of \eqref{3.1}.

Now we prove \qq{4.2}. The idea of this proof comes from \cite[Theorem 7.10]{Wpara21}. Take a constant $M>\max\big\{\max_{\bar\oo}u_0, ~K_0\big\}$. Then the function $\bar u=M$ satisfies
 \bes
 \left\{\begin{array}{llll}
 \bar u_t>-\delta\bar u,\,\;&(x,t)\in\bar\oo\times\Sigma_i^l,\\[1mm]
 \bar u_t-d{\cal L}[\bar u]\ge f(x,t,\bar u),\,\;&(x,t)\in\bar\oo\times\Sigma_i^r,\\[1mm]
 \bar u(x,0)>u_0(x), &x\in\bar\oo.
 \end{array}\right.\lbl{4.5}
 \ees
By Theorem \ref{th2.1}, the problem
\bess
 \left\{\begin{array}{lll}
 z_t=-\delta z, \,\;&(x,t)\in\bar\oo\times\Sigma_i^l,\\[1mm]
 z_t-d\mathcal{L}[z]=f(x,t,z),\;\; &(x,t)\in\bar\oo\times\Sigma_i^r,\\[1mm]
 z(x,0)=M,\;\;&x\in\bar\oo
 \end{array}\right.
 \eess
has a unique solution $z\in C(\bar\oo\times [0,\yy))\cap C^{0,\yy}(\bar\oo\times\bar{\Sigma}_i^l)\cap C^{0,1}(\bar\oo\times\bar{\Sigma}_i^r)$,
and $u\le z\leq M$ in $\bar\oo\times[0,\yy)$ by the comparison principle (Lemma \ref{l2.2}). Let $z^i(x,t)=z(x,t+i\w)$. In view of $f(x,t,u)$ is time periodic with period $\w$, it follows that $z^i(x,t)$ satisfies
 \bess
 \left\{\begin{array}{lll}
 z^i_t=-\delta z^i,  &(x,t)\in\bar\oo\times(0,\rw],\\[1mm]
 z^i_t-d\mathcal{L}[z^i]=f(x,t,z^i),\; &(x,t)\in\bar\oo\times(\rw, \w],\\[1mm]
  z^i(x,0)=z^{i-1}(x,\w),\;\;&x\in\bar\oo.
 \end{array}\right.
 \eess
Note that
 \begin{eqnarray*}
  z^1(x,0)=z(x,\w)\leq M=z(x,0), \;\; x\in\bar\oo.
 \end{eqnarray*}
We can adopt Lemma \ref{l2.2} to produce $z^1\leq z\le M$ in $\bar\oo\times[0,\w]$ which in turn  asserts
 $$ z^2(x,0)=z^1(x,\w)\leq z(x,\w)=z^1(x,0), \;\; x\in\bar\oo.$$
As above, $z^2\leq z^1\leq M$ in $\bar\oo\times[0,\w]$.
Applying the inductive method, we can prove that $z^i$ is monotonically decreasing in $i$ and $z^i\leq M$ in $\bar\oo\times[0,\w]$ for all $i$. Consequently, there exists a non-negative function $z^*$ such that $z^i\to z^*$ pointwisely in $\bar\oo\times[0,\w]$ as $i\to\infty$. Clearly, $z^*(x,0)=z^*(x,\w)$. Similar to the proof of Theorem \ref{th3.1} we can show that $z^*$ satisfies the first two differential equations of \qq{3.1}. And so $z^*\in C(\bar\oo\times[0,\w])\cap C^{0,\yy}(\bar\oo\times[0,\rho\w])\cap C^{0,1}(\bar\oo\times[\rho\w,\w])$ by Lemma \ref{l3.1}. Thus $z^*$ is a non-negative solution of \qq{3.1}, which implies that
$z^*=0$ since $0$ is the unique non-negative solution of \eqref{3.1}. Hence, $\dd\lim_{i\to\infty}z^i(x,t)=0$ uniformly in $\boo\times[0,\w]$ by Dini's Theorem.  Using the fact that $u(x,t+i\w)\le z^i(x,t)$ in $\bar\oo\times[0,\yy)$, we can show that \qq{4.2} holds.

{\rm(ii)} Assume $\lambda_p^\w(\oo)<0$. Let $\ud u(x,t)=\ep\vp_p(x,t)$ with $0<\ep\ll 1$. Then $\ud u>0$ in $\bar\oo\times[0,\w]$ and $\ud u\in C(\bar\oo)\times[0,\w]\cap C^{0,\infty}(\bar\oo\times[0,\rw])\cap C^{0,1}(\bar\oo\times[\rw,\w])$. Clearly, $\ud u(x,0)=\ud u(x,\w)$ for $x\in\bar\oo$, and
 \[\ud u_t=-\delta\ud u,\;\;\;(x,t)\in\bar\oo\times(0,\rw].\]
By the standard method we have that, when $\ep$ is very small,
\bess
 \ud u_t-d\mathcal{L}[\ud u]
 &=&f(x,t,\ud u)+[f_u(x,t,0)-f_u(x,t,\tau(x,t))+\lm_p^\w(\oo)]\ud u \\
 &\le& f(x,t,\ud u),\;\;
 (x,t)\in\bar\oo\times(\rw, \w],
 \eess
where $0\leq \tau \leq \ud u$. This indicates that $\ud u=\ep\vp_p$ is a positive lower solution of \qq{3.1}. Obviously, for any given constant $M>\max\big\{\max_{\bar\oo}u_0, ~K_0\big\}$, the function $\bar u=M$ is an upper solution of \qq{3.1}. It is derived by  Theorem \ref{th3.1} that \qq{3.1} has a unique positive solution $U$.

Now we prove \qq{4.3}. Noticing that $u_0(x)$ is positive and bounded, we can find $0<\ep\ll 1$ such that $\ep\vp_p(x,0)\le u_0(x)$ in $\boo$. Let $\ud z(x,t)$ and $\bar z(x,t)$ be the unique solutions of
 \bess
 \left\{\begin{array}{lll}
 \ud z_t=-\delta\ud z, \,\;&(x,t)\in\bar\oo\times\Sigma_i^l,\\[1mm]
 \ud z_t-d\mathcal{L}[\ud z]=f(x,t,\ud z),\;\; &(x,t)\in\bar\oo\times\Sigma_i^r,\\[1mm]
 \ud z(x,0)=\ep\vp_p(x,0),\;\;&x\in\bar\oo
 \end{array}\right.
 \eess
and
\bess
 \left\{\begin{array}{lll}
 \bar z_t=-\delta\bar z, \,\;&(x,t)\in\bar\oo\times\Sigma_i^l,\\[1mm]
 \bar z_t-d\mathcal{L}[\bar z]=f(x,t,\bar z),\;\; &(x,t)\in\bar\oo\times\Sigma_i^r,\\[1mm]
 \bar z(x,0)=M,\;\;&x\in\bar\oo,
 \end{array}\right.
 \eess
respectively. Then $\ud z(x,t)\le u(x,t)\le \bar z(x,t)$ by the comparison principle\;(Lemma \ref{l2.2}). Define
 \[\ud z^i(x,t)=\ud z(x,t+i\w),\;\;\; \bar z^i(x,t)=\bar z(x,t+i\w), \;\; \forall\;(x,t)\in\bar{\oo}\times[0,\w].\]
Similar to the discussions with respect to $z^i$ in the proof of \qq{4.2}, we can show that $\ud z^i$ and $\bar z^i$ are, respectively, increasing and decreasing in $i$, and there exist two functions $\ud z^*$ and $\bar z^*$ such that
 \[\lim_{i\to\yy}\ud z^i(x,t)=\ud z^*(x,t),\;\;\;\lim_{i\to\yy}\bar z^i(x,t)=\bar z^*(x,t)
 \;\;\;\text{in}\;\;C(\bar\oo\times[0,\w]),\]
and $\ud z^*(x,t)$ and $\bar z^*(x,t)$ are positive solutions of \qq{3.1}. By the uniqueness, $\ud z^*=\bar z^*=U$. Since $\ud z^i(x,t)\le u(x,t+i\w)
\le\bar z^i(x,t)$, the limit \qq{4.3} holds.
\end{proof}

\section{The problems in whole space}
\setcounter{equation}{0} {\setlength\arraycolsep{2pt}

In this section we study the problems in the whole space, i.e., the following problem
 \bes
 \left\{\begin{array}{llll}
 u_t=-\delta u,\,\;&(x,t)\in\R^N\times\Sigma_i^l,\\[1mm]
 u_t-d{\cal L}_{\R^N}[u]= f(x,t,u),\,\;&(x,t)\in\R^N\times\Sigma_i^r,\\[1mm]
 u(x,0)=u_0(x)>0, &x\in\R^N, \;i=0,1,2,\cdots
 \end{array}\right.\lbl{5.1}
 \ees
and the corresponding time periodic problem
 \bes
 \left\{\begin{array}{lll}
 U_t=-\delta U,  &(x,t)\in\R^N\times(0,\rw],\\[1mm]
 U_t-d\mathcal{L}_{\R^N}[U]= f(x,t,U),\; &(x,t)\in\R^N\times(\rw,\w],\\[1mm]
  U(x,0)=U(x,\w),\;\;&x\in\R^N,
 \end{array}\right.\lbl{5.2}
 \ees
where
 $$\mathcal{L}_{\R^N}[u](x,t)=\int_{\R^N}J(x-y)u(y,t){\rm d}y-u(x,t).$$
Assume that the condition {\bf(J)} holds and $J$ is supported in $B_\gamma(0)$ for some $\gamma>0$, $f$ satisfies the conditions {\bf(F1)}-{\bf(F3)} with $\oo=\R^N$, and $u_0\in C(\R^N)$ is bounded.

We first state the maximum principle in the whole space, which can be proved by adapting the main ideas of Lemma \ref{l2.1} and \cite[ Proposition 3.1]{RS2012}.
\begin{lem}\lbl{l5.1} Let $c\in L^{\infty}(\R^N\times[0,\yy))$, and $u\in C(\R^N\times[0,\yy))\cap C^{0,1}(\R^N\times\bar{\Sigma}_i^l)\cap C^{0,1}(\R^N\times\bar{\Sigma}_i^r)$ satisfy
 \bess
 \left\{\begin{array}{lll}
 u_t\geq -\delta u, \,\;&(x,t)\in\R^N\times\Sigma_i^l,\\[1mm]
 u_t-d\mathcal{L}_{\R^N}[u]\geq c(x,t)u, \,\;&(x,t)\in\R^N\times\Sigma_i^r,\\[1mm]
 u(x,0)\geq 0, &x\in\R^N.
 \end{array}\right.
\eess
Then $u\geq 0$ in $\R^N\times[0,\yy)$. Moreover, if $u(x,0)\not\equiv0$ in $\R^N$, then $u>0$ in $\R^N\times(\rw,\yy)$.
\end{lem}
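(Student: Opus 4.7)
The plan is to follow the blueprint of Lemma~\ref{l2.1}, alternating between the ODE inequality on bad seasons $\Sigma_i^l$ and a nonlocal parabolic inequality on good seasons $\Sigma_i^r$, iterated on $i$, but now invoking a whole-space nonlocal maximum principle in place of the bounded-domain one from \cite{CDLL2019}.

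\textbf{Nonnegativity.} On $\R^N\times\bar\Sigma_0^l$, integrating $u_t\ge-\delta u$ gives $u(x,t)\ge e^{-\delta t}u(x,0)\ge 0$, so $u(\cdot,\rho\w)\ge 0$. On $\R^N\times\bar\Sigma_0^r$, I would perform the substitution $v=e^{-\alpha t}u$ with $\alpha>\|c\|_{L^\yy}+d$ to obtain
\[v_t-d(J*v)(x,t)+(d+\alpha-c(x,t))v\ge 0,\qquad v(\cdot,\rho\w)\ge 0,\]
where the zero-order coefficient $d+\alpha-c$ is strictly positive. A standard perturbation argument based on the auxiliary function $v+\eps(1+t)$ and an almost-minimizing sequence in $\R^N\times[\rho\w,\w]$, exactly as in \cite[Proposition~3.1]{RS2012}, rules out $\inf v<0$, so $v\ge 0$ and hence $u\ge 0$ on $\R^N\times\bar\Sigma_0^r$. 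Period-by-period induction then extends this to $\R^N\times[0,\yy)$.

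\textbf{Strict positivity.} Given $u(\cdot,0)\not\equiv 0$, continuity yields a point $x_0$ and a ball $B_r(x_0)$ on which $u(\cdot,0)>0$. The ODE gives $u(x_0,t)\ge e^{-\delta t}u(x_0,0)>0$ throughout $[0,\rho\w]$ and, by continuity again, $u(\cdot,\rho\w)>0$ on a (possibly smaller) ball $B_{r_0}(x_0)$. To carry this positivity onto all of $\R^N$ during $(\rho\w,\w]$, I would rewrite the equation on $\Sigma_0^r$ as the Duhamel identity
\[u(x,t)=e^{-d(t-\rho\w)}u(x,\rho\w)+\int_{\rho\w}^t e^{-d(t-s)}\bigl(d(J*u)(x,s)+c(x,s)u(x,s)\bigr){\rm d}s,\]
and after absorbing $c$ by a further exponential shift so that the integrand is nonnegative, observe that whenever $u(\cdot,s)>0$ on a ball $B_\varrho(x_0)$, continuity of $J$ together with $J(0)>0$ and $\mathrm{supp}\,J\subset\overline{B_\gamma(0)}$ makes $(J*u)(\cdot,s)$ strictly positive on $B_{\varrho+\eta}(x_0)$ for some $\eta=\eta(J)>0$. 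Iterating the Duhamel identity over finitely many subintervals of $(\rho\w,\w]$ enlarges the positivity set to cover any prescribed compact subset of $\R^N$; exhaustion yields $u>0$ on $\R^N\times(\rho\w,\w]$, and induction on $i$ extends this to $\R^N\times(\rho\w,\yy)$.

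\textbf{Main obstacle.} The principal difficulty is the whole-space nonlocal maximum principle in the good season: the infimum of $v$ need not be attained on the unbounded domain, so the direct test-point argument underlying the bounded-domain result \cite[Lemma~2.2]{CDLL2019} used in Lemma~\ref{l2.1} is unavailable. This is precisely where I would invoke \cite[Proposition~3.1]{RS2012}, whose perturbation-plus-almost-minimizer argument uses essentially the boundedness of $c$ together with $\int_{\R^N}J=1$; the compact-support assumption on $J$ is used only afterwards, to quantify the Duhamel expansion of the positivity set. Once these two ingredients are in place, the iteration over periods and the propagation of positivity are routine.
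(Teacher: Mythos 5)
Your proposal is correct and follows exactly the route the paper intends: the paper gives no written proof of Lemma~\ref{l5.1}, stating only that it ``can be proved by adapting the main ideas of Lemma~\ref{l2.1} and \cite[Proposition~3.1]{RS2012},'' which is precisely the period-by-period ODE/nonlocal alternation plus whole-space maximum principle plus Duhamel propagation of positivity that you carry out. The only point worth flagging is that the almost-minimizer step (and hence the lemma as stated) implicitly needs $u$ bounded below, a hypothesis inherited from \cite{RS2012} that the paper omits but which holds in all of its applications.
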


To investigate the dependence of solutions of \eqref{5.1} on initial function $u_0$, we denote the solution of \eqref{5.1} by $u(x,t; u_0))$. For any given $u_0,v_0\in C(\R^N,\R)$ satisfying $u_0, v_0>0$ for $x\in\R^N$, we define
\bess
\Theta(u_0, v_0)=\inf\kk\{\ln\alpha:\frac1{\alpha}u_0(\cdot)\leq v_0(\cdot)\leq \alpha u_0(\cdot), \;\; \alpha\geq 1\rr\}.
\eess
When $t\in[0,\rho\w]$, direct calculation shows that  $u(x,t)=e^{-\delta t}u_0(x)$ for $x\in\R^N$. Thus,
\bess
\Theta(u(\cdot,t;u_0), u(\cdot,t; v_0))=\Theta(u_0, v_0), \ \ t\in[0,\rho\w].
\eess
When $t\in(\rho\w,\w]$, \eqref{5.1} is equivalent to
 \bess
 \left\{\begin{array}{lll}
u_t-d\mathcal{L}_{\R^N}[u]= f(x,t,u),\; &(x,t)\in\R^N\times(\rw,\w],\\[1mm]
u(x,\rho\w)=e^{-\delta\rho\w}u(x,0),\;\;&x\in\R^N,
 \end{array}\right.
 \eess
Similar to the proof of \cite[Proposition 5.1]{RS2012}, we have the following argument.
\begin{lem}\lbl{l5.2} For any given $u_0,v_0\in C(\R^N,\R)$ satisfying $u_0, v_0>0$ and $u_0\neq v_0$ for all $x\in\R^N$. When $t\in(\rho\w,\w]$, then $\Theta(u(\cdot, t; u_0), u(\cdot, t; v_0))$
strictly decreasing in time $t$.
\end{lem}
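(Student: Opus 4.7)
The plan is to exploit the strict sub-homogeneity of $f(x,t,\cdot)$ encoded in \textbf{(F1)} together with the strong maximum principle of Lemma \ref{l5.1}. Write $u_1(x,t)=u(x,t;u_0)$ and $u_2(x,t)=u(x,t;v_0)$. On $[0,\rho\w]$ both solutions are just $e^{-\delta t}$ times their initial data, so $u_2/u_1$ is time-independent and $\Theta$ is constant there, as already noted in the excerpt. My task is therefore to show that for arbitrary $\rho\w\le t_1<t_2\le\w$,
\[\Theta(u_1(\cdot,t_2),u_2(\cdot,t_2))<\Theta(u_1(\cdot,t_1),u_2(\cdot,t_1)).\]

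Set $\alpha_1:=e^{\Theta(u_1(\cdot,t_1),u_2(\cdot,t_1))}$; since $u_0\not\equiv v_0$ and both are positive, $\alpha_1>1$, and by definition $\alpha_1^{-1}u_1(\cdot,t_1)\le u_2(\cdot,t_1)\le\alpha_1 u_1(\cdot,t_1)$. The key observation is that on $(\rho\w,\w]$ the function $\bar u:=\alpha_1 u_1$ is a \emph{strict} supersolution of the equation solved by $u_2$: indeed, \textbf{(F1)} and $\bar u=\alpha_1 u_1>u_1>0$ give
\[\bar u_t-d\mathcal L_{\R^N}[\bar u]=\alpha_1 f(x,t,u_1)=\bar u\cdot\frac{f(x,t,u_1)}{u_1}>\bar u\cdot\frac{f(x,t,\bar u)}{\bar u}=f(x,t,\bar u),\]
while a symmetric computation makes $\alpha_1^{-1}u_1$ a strict subsolution sitting below $u_2$ at $t_1$.

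Setting $W=\bar u-u_2$ and using \textbf{(F2)} to write $f(x,t,\bar u)-f(x,t,u_2)=c(x,t)W$ with $c\in L^\infty$, subtraction of the two equations produces
\[W_t-d\mathcal L_{\R^N}[W]-c(x,t)W=\alpha_1 f(x,t,u_1)-f(x,t,\alpha_1 u_1)=:\eta(x,t)>0\quad\text{on }\R^N\times(\rho\w,\w].\]
Lemma \ref{l5.1} (applied in Duhamel form with the bounded zeroth-order term $c$ and the strictly positive source $\eta$) then propagates $W\ge 0$ and upgrades it to $W(\cdot,t_2)>0$ pointwise, and analogously for the lower bound $u_2-\alpha_1^{-1}u_1$. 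The remaining step is to convert these strict pointwise inequalities into a strictly smaller constant $\alpha_2<\alpha_1$ with $\alpha_2^{-1}u_1(\cdot,t_2)\le u_2(\cdot,t_2)\le\alpha_2 u_1(\cdot,t_2)$, which is exactly $\Theta(u_1(\cdot,t_2),u_2(\cdot,t_2))\le\ln\alpha_2<\ln\alpha_1$.

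The hard part is precisely this last upgrade from pointwise strict inequality to a uniformly smaller sandwiching constant. In the whole-space setting neither $\sup_x W(\cdot,t_2)/u_1(\cdot,t_2)$ nor its infimum need be attained, so mere positivity of $W$ is insufficient; one must quantitatively control the ratio $W/u_1$ at infinity. This is where the finite-range hypothesis on $J$ (support in $B_\gamma(0)$) and the $L^\infty$ bounds on $u_1,u_2$ are essential, and it is carried out by the argument of \cite[Prop.~5.1]{RS2012} cited in the statement.
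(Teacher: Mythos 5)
The paper offers no proof of Lemma \ref{l5.2} beyond the citation of \cite[Proposition 5.1]{RS2012}, and your skeleton --- the strict super/subsolutions $\alpha_1 u_1$ and $\alpha_1^{-1}u_1$ produced by the strict sub-homogeneity in \textbf{(F1)}, followed by contraction of the part metric $\Theta$ --- is exactly the argument of that reference, so in structure you are on the same route and your computations (the strict supersolution inequality, the linearization $f(x,t,\bar u)-f(x,t,u_2)=c(x,t)W$ via \textbf{(F2)}, the positive source $\eta$) are all correct. The problem is that the step you explicitly defer is not a technical afterthought: passing from $W:=\alpha_1u_1-u_2>0$ pointwise to $W\ge\epsilon\,u_1$ uniformly on $\R^N$ is the \emph{entire} content of the lemma, since without it no strict decrease of $\Theta$ has been established. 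A proof that ends by citing the very proposition the lemma is paraphrasing has not really proved anything new.

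Moreover, your diagnosis of what closes that step is off. The compact support of $J$ plays no role here. What one actually needs is a uniform lower bound $\eta(x,t)\ge\eta_0>0$; granted that, the crude estimate $W_t\ge -(d+\|c\|_\infty)W+\eta_0$ (discard the nonnegative convolution term, using $W\ge0$ from the comparison principle) and Gronwall give $W(\cdot,t_2)\ge \eta_0c_0>0$ with $c_0=c_0(t_2-t_1)$ independent of $x$, and then the upper bound $u_1\le M$ yields $W\ge(\eta_0c_0/M)u_1$, i.e.\ a strictly smaller sandwiching constant $\alpha_2=\alpha_1-\eta_0c_0/M$. The real price is hidden in $\eta=\alpha_1u_1\big[\tfrac{f(x,t,u_1)}{u_1}-\tfrac{f(x,t,\alpha_1u_1)}{\alpha_1u_1}\big]$: to bound this below one needs $u_1$ to be bounded \emph{below} away from zero uniformly in $x$ (and the strict decrease of $f/u$ to be uniform in $x$ on compact $u$-ranges). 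Pointwise positivity of $u_0,v_0$, which is all the statement assumes, gives neither this nor even finiteness of $\Theta(u_0,v_0)$. So the missing piece of your argument coincides with a missing hypothesis ($\inf_{\R^N}u_0>0$, $\inf_{\R^N}v_0>0$, as in \cite{RS2012}); you should either add that hypothesis and carry out the Gronwall step above, or explain why the periodic solutions to which the lemma is applied in Step 2 of Theorem \ref{th5.2} satisfy it.
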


Next, same as the proof of Theorem \ref{th2.1}, and using Lemma \ref{l5.1}, we can prove the following theorem.

\begin{theo}\lbl{th5.1}\, The problem \qq{5.1} has a unique global solution
 \bess
 u\in C(\R^N\times[0,\yy))\cap C^{0,\infty}(\R^N\times\bar{\Sigma}_i^l)\cap C^{0,1}(\R^N\times\bar{\Sigma}_i^r),\eess
and
 \bess
0<u(x,t)\le \max\left\{\sup_{\R^N}u_0, ~K_0\right\}~\mbox{ in }\; \R^N\times[0,\yy),
\eess
where $K_0$ is defined in the assumption {\bf(F3)}.
 \end{theo}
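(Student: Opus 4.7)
The plan is to mirror the iterative construction used in the proof of Theorem \ref{th2.1}, proceeding period by period on the two-piece structure $\Sigma_i^l\cup\Sigma_i^r$, with Lemma \ref{l5.1} replacing Lemma \ref{l2.1} wherever positivity or a pointwise comparison is invoked. Because the ODE piece $u_t=-\delta u$ acts pointwise in $x$, and $\mathcal{L}_{\R^N}$ maps $L^\infty(\R^N)$ boundedly to itself (since $\int_{\R^N}J=1$), the whole-space setting behaves essentially like the bounded-domain one, and the sup-bound $\max\{\sup_{\R^N}u_0,K_0\}$ will propagate through the iteration.

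Concretely, on $\bar\Sigma_0^l$ I set $u^{0,l}(x,t)=e^{-\delta t}u_0(x)$, which lies in $C^{0,\infty}(\R^N\times\bar\Sigma_0^l)$ and satisfies $0<u^{0,l}\le\sup_{\R^N}u_0$. On $\bar\Sigma_0^r$ I then solve the nonlocal Cauchy problem
$$u_t - d\,\mathcal{L}_{\R^N}[u] = f(x,t,u),\qquad u(x,\rho\omega) = u^{0,l}(x,\rho\omega),$$
by a Banach fixed-point argument in the space of bounded continuous functions on $\R^N\times[\rho\omega,\rho\omega+\tau]$ for sufficiently small $\tau>0$, exploiting the boundedness of $\mathcal{L}_{\R^N}$ on $L^\infty$ and the local Lipschitz condition {\bf(F2)}. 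The a priori sup-bound $\max\{\sup_{\R^N}u^{0,l}(\cdot,\rho\omega),K_0\}$, obtained by comparing the candidate solution with the constant supersolution of this magnitude via Lemma \ref{l5.1}, lets me extend the local solution to all of $\bar\Sigma_0^r$, producing $u^{0,r}\in C^{0,1}(\R^N\times\bar\Sigma_0^r)$; positivity of $u^{0,r}$ then follows from the second assertion of Lemma \ref{l5.1} applied to $u^{0,r}$ itself. Splicing $u^{0,l}$ and $u^{0,r}$ yields $u^0$ on $\R^N\times[0,\omega]$; continuity at $t=\rho\omega$ is built in because both pieces share the value $u^{0,l}(\cdot,\rho\omega)$ there. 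Iterating this construction on each period $[i\omega,(i+1)\omega]$ with initial data $u^{i-1}(\cdot,i\omega)$ produces the global solution $u$, and since the ODE part is non-increasing and the nonlocal part stays bounded by $\max\{\text{previous sup-bound},K_0\}$, the uniform bound $\max\{\sup_{\R^N}u_0,K_0\}$ survives every step.

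For uniqueness I let $\bar u,\ud u$ be two such solutions, set $v=\bar u-\ud u$, and take $L=\|\bar u\|_{L^\infty(\R^N\times[0,\infty))}+\|\ud u\|_{L^\infty(\R^N\times[0,\infty))}$, which is finite thanks to the global bound just obtained. Condition {\bf(F2)} then furnishes $c\in L^\infty(\R^N\times\cup_i\Sigma_i^r)$ with $\|c\|_\infty\le K(L)$ such that $f(x,t,\bar u)-f(x,t,\ud u)=c(x,t)v$, so $v$ verifies the hypotheses of Lemma \ref{l5.1} with $v(x,0)\equiv 0$; hence $v\ge 0$, and interchanging the roles of $\bar u$ and $\ud u$ gives $v\le 0$, so $\bar u\equiv\ud u$.

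The principal obstacle, which the paper glosses with the phrase \emph{same as the proof of Theorem \ref{th2.1}}, is the fixed-point/continuation step on each $\bar\Sigma_i^r$: unlike the bounded-domain case, where standard theory for the nonlocal dispersal equation on $\oo$ can be cited, in $\R^N$ one must verify directly that the Duhamel-type map
$$T[u](x,t) = e^{-d(t-\rho\omega)}u(x,\rho\omega) + \int_{\rho\omega}^{t} e^{-d(t-s)}\Big[d\int_{\R^N} J(x-y)u(y,s)\,\dy + f(x,s,u(x,s))\Big]{\rm d}s$$
sends bounded continuous functions on $\R^N\times[\rho\omega,\rho\omega+\tau]$ into themselves and is a strict contraction for small $\tau$. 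This uses only $\int_{\R^N}J=1$, {\bf(F2)}, and the a priori sup-bound to globalize the local fixed point to all of $\bar\Sigma_i^r$. Once this ingredient is in place, the remainder of the argument is a verbatim adaptation of the proof of Theorem \ref{th2.1}.
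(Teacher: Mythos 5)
Your proposal takes essentially the same route as the paper, which proves Theorem \ref{th5.1} simply by repeating the period-by-period construction of Theorem \ref{th2.1} with Lemma \ref{l5.1} playing the role of Lemma \ref{l2.1}; your explicit Duhamel/contraction argument on each $\R^N\times\bar{\Sigma}_i^r$ supplies precisely the local solvability step that the paper's Theorem \ref{th2.1} invokes as ``well known,'' and your a priori bound, positivity, and uniqueness arguments are the whole-space transplants of Lemmas \ref{l2.1}--\ref{l2.2}. The argument is correct (with the understood caveat, implicit in the paper as well, that uniqueness is asserted within the class of bounded solutions, since the maximum principle of Lemma \ref{l5.1} on $\R^N$ is applied to the bounded difference $v=\bar u-\ud u$).
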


Similar to Section 4, we assume that
 \[f_u(x,t,0)=a(t)+b(x),\;\;\text{with}\;\;\bar{a}=\frac{1}{(1-\rho)\w}\int^{w}_{\rho\w}a(s){\rm d}s=0,\]
where $b\in C(\R^N)\cap L^\yy(\R^N)$, $a\in C(\bar{\Sigma}_i^r)$ and $a(t+\w)=a(t)$ for $t\in\Sigma_i^r$. In the following it is assumed that
\begin{enumerate}
\item[\bf(H4)]there exists $R_0>0$ such that the condition {\bf(H1)} holds with $\oo=B_R(0)$ for all $R>R_0$.
\end{enumerate}
Here we mention that if for any $R>0$, there exists a positive constant $C(R)$ such that
 \[|b(x)-b(y)|\le C(R)|x-y|^N,\;\;\;\forall\;x,y\in B_R(0),\]
then the condition {\bf(H1)} holds.

To save spaces, we denote
  \[\lambda_p(B_R(0))=\lambda_p(R),\;\;\;\lambda_p^\w(B_R(0))=\lambda_p^\w(R).\]
Then $\lambda_p(R)$ is a principal eigenvalue of
 \[-d\kk(\int_{B_R(0)}J(x-y)\phi(y,t){\rm d}y-\phi(x,t)\rr)-b(x)\phi=\lm\phi,\;\; x\in B_R(0),\]
Utilizing \cite[Proposition 1.1(i) and Theorem 1.1]{C2010}, we know that $\lambda_p(R)$ is monotone decreasing with respect to $R$ and
  \[d-\sup_{x\in B_R(0)}\left(b(x)+d\int_{B_R(0)}J(x-y){\rm d}y\right)<\lambda_p(R)<d-\sup_{x\in B_R(0)}b(x).\]
Thus, the limit
 \[\lim_{R\to\yy}\lambda_p(R)=\lambda_p(\yy)\]
exists, and
  \[-\sup_{x\in \R^N}b(x)<\lambda_p(\infty)<d-\sup_{x\in\R^N}b(x).\]
Therefore, the principal eigenvalue $\lambda_p^\w(R)=\delta\rho+(1-\rho)\lambda_p(R)$ of \qq{4.1} with $\oo=B_R(0)$ is bounded and monotone decreasing in $R$, and the limit
 \[\lim_{R\to\yy}\lambda_p^\w(R)=\lambda_p^\w(\yy)\]
exists and is finite. Especially, when $b(x)=b$ is a constant, similar to the proof of \cite[Proposition 3.4]{CDLL2019} we have that $\lambda_p(R)\to -b$ as $R\to\infty$, and
$\lambda_p(R)\to d-b$ as $R\to 0$. It follows that
$$\lim_{R\to\infty}\lambda_p^\w(R)=\lambda_p^\w(\yy)=\delta\rho-(1-\rho)b, \ \
\lim_{R\to0}\lambda_p^\w(R)=\delta\rho-(1-\rho)(b-d).$$

\begin{theo}\lbl{th5.2}\, If $\lambda_p^\w(\yy)<0$, then problem \qq{5.2} has a unique  positive solution $U(x,t)$, and the unique solution $u(x,t)$ of \qq{5.1} satisfies
 \bes
 \lim_{i\to\yy}u(x, t+i\w)=U(x,t)\;\;\;\text{locally uniformly in}\;\;\R^N\times[0,\w].
 \lbl{5.3}\ees
 \end{theo}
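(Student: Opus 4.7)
The plan is to mirror the approach of Theorem \ref{th4.1}(ii), producing existence, uniqueness and convergence through a single sub/super-solution sandwich, with Lemma \ref{l5.2} replacing the bounded-domain monotone-comparison that Lemma \ref{l3.2} provides in Section 4.

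Since $\lm_p^\w(R)$ is monotone decreasing in $R$ with limit $\lm_p^\w(\yy)<0$, pick $R>R_0$ with $\lm_p^\w(R)<0$, and let $\vp_p=\vp_p(x,t)$ be the positive $\w$-periodic eigenfunction of \qq{4.1} on $B_R(0)$ supplied by Section 4. Extend $\vp_p$ by zero to $\R^N$, denoting the extension $\td\vp_p$. Using the strict decrease of $f(x,t,u)/u$ in {\bf(F1)} (exactly as in the proof of Theorem \ref{th4.1}(ii)), for $\ep>0$ small the function $\ep\td\vp_p$ is a (non-strict) lower solution of \qq{5.2}, and $\ep\td\vp_p(x,0)\le u_0(x)$ on $\R^N$ (trivially off the compact support of $\td\vp_p$, and on that support by positivity of $u_0$). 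Pick $M>\max\{\sup_{\R^N}u_0,K_0\}$, so that $M$ is a supersolution. Let $\ud z,\bar z$ denote the solutions of \qq{5.1} with initial data $\ep\td\vp_p$ and $M$, respectively; Lemma \ref{l5.1} and Theorem \ref{th5.1} then give $\ud z\le u\le \bar z$ in $\R^N\times[0,\yy)$.

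Set $\ud z^i(x,t)=\ud z(x,t+i\w)$ and $\bar z^i(x,t)=\bar z(x,t+i\w)$. Repeating verbatim the monotone-in-$i$ argument from Theorem \ref{th4.1}(ii), $\{\ud z^i\}$ is non-decreasing, $\{\bar z^i\}$ is non-increasing, both lie in $[0,K_0]$, and they converge pointwise on $\R^N\times[0,\w]$ to limits $\ud z^*$ and $\bar z^*$ with $\ud z^*\le\bar z^*$. Passing to the limit in the integral form of the equations via dominated convergence, and using the $\R^N$-analog of Lemma \ref{l3.1} for continuity, $\ud z^*$ and $\bar z^*$ are positive classical $\w$-periodic solutions of \qq{5.2}; this gives existence. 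To force $\ud z^*=\bar z^*$, apply Lemma \ref{l5.2} to $\ud z^*(\cdot,0)$ and $\bar z^*(\cdot,0)$: if they differed, then $\Theta(\ud z^*(\cdot,t),\bar z^*(\cdot,t))$ would strictly decrease on $(\rw,\w]$, contradicting the equality $\Theta(\ud z^*(\cdot,0),\bar z^*(\cdot,0))=\Theta(\ud z^*(\cdot,\w),\bar z^*(\cdot,\w))$ that follows from $\w$-periodicity. Denote the common value $U:=\ud z^*=\bar z^*$. Uniqueness follows by shrinking $\ep$ and enlarging $M$ so that any other candidate positive periodic solution $V$ satisfies $\ep\td\vp_p\le V(\cdot,0)\le M$; repeating the sandwich with these adjusted data squeezes $V$ between the same $\ud z^*$ and $\bar z^*=U$. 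Finally, the pointwise monotone convergence $\ud z^i\uparrow U$ and $\bar z^i\downarrow U$, together with the continuity of $U$ and Dini's theorem, upgrades to locally uniform convergence on $\R^N\times[0,\w]$; the sandwich $\ud z^i(x,t)\le u(x,t+i\w)\le\bar z^i(x,t)$ then delivers \qq{5.3}.

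The principal obstacle is the effectiveness of Lemma \ref{l5.2}: its conclusion is vacuous if $\Theta(\ud z^*(\cdot,0),\bar z^*(\cdot,0))=\yy$, which can occur when the two periodic candidates decay at different rates at spatial infinity so that no global multiplicative bound exists. To close this gap, one must either establish a uniform positive lower bound on $\ud z^*$ on all of $\R^N$ -- extractable from the compact-support lower solution $\ep\td\vp_p$ together with the nonlocal smoothing over one period, exploiting the sign condition $\lm_p^\w(\yy)<0$ and a covering argument -- or bypass Lemma \ref{l5.2} via a direct maximum-principle argument on $\bar z^*-\tau\ud z^*$ with $\tau=\sup\{s>0:s\bar z^*\le\ud z^*\}$, mirroring the proof of Lemma \ref{l3.2} and handling the non-attainment of extrema on the unbounded domain by localization to an intermediate ball where the eigenfunction $\vp_p$ is strictly positive.
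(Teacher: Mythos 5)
Your overall strategy differs from the paper's in the existence step: you build a single sub/super-solution sandwich directly on $\R^N$, using the zero-extension of an eigenfunction on one fixed large ball as the subsolution, whereas the paper exhausts $\R^N$ by balls $B_{R_n}(0)$, invokes Theorems \ref{th2.1} and \ref{th4.1} on each ball to obtain the solutions $u_n$ of \qq{5.4} and the periodic solutions $U_n$ of \qq{5.5} together with the convergence $u_n(\cdot,\cdot+i\w)\to U_n$, shows that $u_n$ and $U_n$ are monotone in $n$ by comparison, and then passes $n\to\infty$ via dominated convergence and Dini's theorem. The uniqueness step is essentially identical in both (the functional $\Theta$ and Lemma \ref{l5.2}), and your worry about the case $\Theta=\infty$, when two candidates decay at different rates at spatial infinity, is a fair one: the paper's Step 2 is exposed to exactly the same objection and does not address it either.

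The concrete gap in your argument is the zero-extension $\td\vp_p$. For the nonlocal operator the principal eigenfunction $\phi_p$ is bounded away from zero on the closed ball $\overline{B_R(0)}$ (it does not vanish at the boundary the way a local Dirichlet eigenfunction would), so $\td\vp_p$ is discontinuous across $\partial B_R(0)$. Consequently you cannot repeat ``verbatim'' the monotonicity argument of Theorem \ref{th4.1}(ii): the first step there is $\ud z(\cdot,\w)\ge \ud z(\cdot,0)=\ep\vp_p(\cdot,0)$, obtained by applying the comparison principle to $\ud z-\ep\vp_p$, and Lemma \ref{l5.1} requires that difference to lie in $C(\R^N\times[0,\infty))$, which it does not. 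The inequality can be rescued, for instance by restricting $\ud z$ to $B_R(0)$, noting that it is a supersolution of the truncated problem there because $\int_{\R^N}J(x-y)\ud z\,{\rm d}y\ge\int_{B_R(0)}J(x-y)\ud z\,{\rm d}y$, and comparing with the bounded-domain solution that dominates $\ep\vp_p$; but that detour runs precisely through the bounded-domain problems, which is the route the paper takes from the outset. With that repair, and with an $\R^N$-analogue of Lemma \ref{l3.1} to guarantee continuity of the limit functions (which you do flag), your scheme closes and yields the same conclusion.
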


\begin{proof}\, {\it Step 1: Existence}. As $\lambda_p^\w(\yy)<0$, we can find a $R_0\gg 1$ such that
 \[\lambda_p^\w(R)<0,\;\;\forall\; R\ge R_0.\]
For any monotone increasing sequence $\{R_n\}_{n=1}^{\infty}$ satisfying $R_1\geq R_0$ and $R_n\to\infty$ as $n\to\infty$, we consider the problems
 \bes
 \left\{\begin{array}{llll}
 u_t=-\delta u,\,\;&(x,t)\in \up{B_{R_n}(0)}\times\Sigma_i^l,\\[1mm]
 u_t-d{\cal L}_n[u]=f(x,t,u),\,\;&(x,t)\in\up{B_{R_n}(0)}\times\Sigma_i^r,\\[1mm]
 u(x,0)=u_0(x)>0, &x\in\up{B_{R_n}(0)},\;i=0,1,2,\cdots
 \end{array}\right.\lbl{5.4}
 \ees
and
 \bes
 \left\{\begin{array}{lll}
U_t=-\delta U,  &(x,t)\in\up{B_{R_n}(0)}\times(0,\rw],\\[1mm]
U_t-d\mathcal{L}_n[U]=f(x,t,U),\; &(x,t)\in\up{B_{R_n}(0)}\times(\rw,\w],\\[1mm]
U(x,0)=U(x,\w),\;\;&x\in\up{B_{R_n}(0)}, \;n\geq 1,
 \end{array}\right.\lbl{5.5}
 \ees
where
\bess
\mathcal{L}_n[u](x,t)=\int_{B_{R_n(0)}}J(x-y)u(y,t){\rm d}y-u(x,t).
\eess

In view of Theorems \ref{th2.1} and \ref{th4.1}, it follows that problems \qq{5.4} and \qq{5.5} have unique positive solutions $u_n(x,t)$ and $U_n(x,t)$, respectively, which satisfy
 \bes
 \lim_{i\to\yy}u_n(x, t+i\w)=U_n(x,t)\;\;\;\text{in}\;\;C(\up{B_{R_n}(0)}\times[0,\w]).
 \lbl{5.6}\ees
Let $M>\max\big\{\dd\sup_{\R^N}u_0, ~K_0\big\}$. By Lemmas \ref{l2.2},  and \ref{l3.2} we can show that
\bess
&0<u_n(x,t)\leq u_{n+1}(x,t)\leq M,\;\;
\forall\;(x,t)\in\up{B_{R_n}(0)}\times[0,\infty),\;n\geq 1;\\[1mm]
&0<U_n(x,t)\leq U_{n+1}(x,t)\leq M,\;\forall\;(x,t)\in\up{B_{R_n}(0)}\times[0,\w],\;n\geq 1.
\eess
Thus, there exist $u\in L^{\infty}(\R^N\times [0,\infty))$ and $U\in L^{\infty}(\R^N\times [0,\w])$ such that
\bess
&\dd\lim_{n\to\infty}u_n(x,t)=u(x,t)\;\;\mbox{point-wise in}\;\;\R^N\times [0,\infty);\\[1mm]
&\dd\lim_{n\to\infty}U_n(x,t)=U(x,t)\;\;\mbox{point-wise in}\;\;\R^N\times [0,\w].
\eess
Letting $n\to\infty$ in \eqref{5.4} and \eqref{5.5}, and using the dominated convergence theorem, we can show that $u$ and $U$ are, respectively, the positive solutions of \eqref{5.1} and \eqref{5.2}.

{\it Step 2: Uniqueness}. Suppose that $U_1(x,t)$ and $U_2(x,t)$ are two positive solutions of problem \eqref{5.2}. If $U_1\neq U_2$, we denote $(U_1(\cdot,0), U_2(\cdot,0))=:(\phi_0, \psi_0)$, it then follows from Lemma \ref{l5.2} that
$$\Theta(U(\cdot, \w; \phi_0), U(\cdot, \w; \psi_0))<\Theta(U(\cdot, t; \phi_0), U(\cdot, t; \psi_0))<\Theta(U(\cdot, \rho\w; \phi_0), U(\cdot, \rho\w; \psi_0)=\Theta(\phi_0, \psi_0)$$
for $t\in(\rho\w,\w]$. Notice that
$$(U(\cdot, \w; \phi_0), U(\cdot, \w; \psi_0))=(U_1(\cdot,\w), U_2(\cdot,\w))=(\phi_0, \psi_0).$$ This is a contradiction. Therefore, $U_1=U_2$.

{\it Step 3}. By the Dini's theorem we have that
\bess
&\dd\lim_{n\to\infty}u_n(x,t)=u(x,t)\;\;\mbox{locally uniformly in}\;\;\R^N\times [0,\infty);\\[1mm]
&\dd\lim_{n\to\infty}U_n(x,t)=U(x,t)\;\;\mbox{locally uniformly in}\;\;\R^N\times [0,\w].
\eess
These facts, together with the limit \qq{5.6}, allow us to derive that \qq{5.3} holds.
\end{proof}

\end{document}